\pgfplotsset{compat=newest}
\numberwithin{equation}{section}
\newtheorem{theorem}{Theorem}[section]
\newtheorem{definition}[theorem]{Definition}
\newtheorem{lemma}[theorem]{Lemma}
\newtheorem{remark}[theorem]{Remark}
\newtheorem{example}{Example}[section]
\newlength\plotheight
\newlength\plotwidth
\definecolor{pureblack}{gray}{0}
\newcommand{\RIND}[1]{\mbox{$\scriptstyle{#1}$}}
\DeclareOldFontCommand{\rm}{\normalfont\rmfamily}{\mathrm}
\DeclareOldFontCommand{\sf}{\normalfont\sffamily}{\mathsf}
\DeclareOldFontCommand{\tt}{\normalfont\ttfamily}{\mathtt}
\DeclareOldFontCommand{\bf}{\normalfont\bfseries}{\mathbf}
\DeclareOldFontCommand{\it}{\normalfont\itshape}{\mathit}
\DeclareOldFontCommand{\sl}{\normalfont\slshape}{\@nomath\sl}
\DeclareOldFontCommand{\sc}{\normalfont\scshape}{\@nomath\sc}
\begin{document}
\title{iSIRA: Integrated Shift-Invert Residual Arnoldi Method for Graph Laplacian Matrices from Big Data}%
\author{Wei-Qiang Huang\thanks{Big Data Research Center, National Chiao Tung University, Hsinchu 300, Taiwan, %
\href{mailto:wqhuang@nctu.edu.tw}{wqhuang@nctu.edu.tw}.} %
\and Wen-Wei Lin\thanks{Department of Applied Mathematics and Shing-Tung Yau Center, National Chiao Tung University, Hsinchu 300, Taiwan, %
\href{mailto:wwlin@math.nctu.edu.tw}{wwlin@math.nctu.edu.tw}.} %
\and Henry Horng-Shing Lu\thanks{Institute of Statistics and Big Data Research Center, National Chiao Tung University, Hsinchu 300, Taiwan, %
\href{mailto:gu@cs.sunysb.edu}{hslu@stat.nctu.edu.tw}.} %
\and Shing-Tung Yau\thanks{Mathematics Department, Harvard University, Cambridge, Massachusetts, 02138, USA, %
\href{mailto:yau@math.harvard.edu}{yau@math.harvard.edu}.}}
\maketitle

\begin{abstract}
The eigenvalue problem of a graph Laplacian matrix $L$ arising from a simple, connected and undirected graph has been given more attention %
due to its extensive applications, such as spectral clustering, community detection, complex network, image processing and so on. %
The associated graph Laplacian matrix is symmetric, %
positive semi-definite, and is usually large and sparse. %
Computing some smallest positive eigenvalues and corresponding eigenvectors is often of interest. %

However, the singularity of $L$ makes the classical eigensolvers inefficient since we need to factorize $L$ for the purpose of solving large and sparse linear systems exactly. %
The next difficulty is that it is usually time consuming or even unavailable to %
factorize a large and sparse matrix arising from real network problems from big data %
such as social media transactional databases, and sensor systems because there is in general not only local connections.%

In this paper, we propose an eignsolver based on the inexact residual Arnoldi \cite{L07,LS07} method together with an %
{\it implicit remedy of the singularity} and an {\it effective deflation for convergent eigenvalues}. %
Numerical experiments reveal that the integrated eigensolver outperforms the classical Arnoldi/Lanczos method for computing some smallest positive eigeninformation provided the {\normalfont LU} factorization is not available.%
\end{abstract}

\newcommand{\keywords}[1]{\paragraph{Keywords:} #1}
\keywords{Graph Laplacian Matrix, Eigenvalue Problem, Trimming, Deflation, Shift-and-Invert Residual Arnoldi, Inexact Eigensolver}

\newcommand{\subclass}[1]{\paragraph{Mathematics Subject Classification (2010):} #1}
\subclass{15B99 \and 65F50 \and 65N25}

\section{Introduction}\label{sec:Introduction}
Graph-based approaches have been an increasingly favorable tool for  representation, processing and explosion of interests in studying large networks %
because of the potential for capturing dependence structure of the data set. %
One of the research problems of interests is to compute some eigenpairs based on the so-called Graph Laplacians Eigenvalue Problem (GLEP). %
The GLEPs appear in many areas, such as combinatorial optimization, clustering, embedding, dimensionality reduction, data representation, %
community detection, image processiong and complex networks 
\cite{MP93,SM00,NJY02,BN03,XW05,WS05,N06,vL07,S07,ADGKMZ08,F10,HGLY14,CH15IP,CH15A,PCOA15}.
For more applications on this topic, we refer to as \cite{M97}. %

\subsection{The Graph Laplacian Matrix}\label{sec:laplacian}

Given a simple (no multiple edges or loops), connected and undirected graph $\mathcal{G}=\left(\mathcal{V},\mathcal{E}\right)$, %
where $\mathcal{V}$ is the vertex set with $| \mathcal{V} | = n$ and $\mathcal{E}$ is the edge set that describes the connection between vertices. %
In addition, the degree of each vertex $v \in \mathcal{V}$ is the number of edges which incident to $v$. %
According to the information, the adjacency matrix $A$ as well as the degree matrix $D$ of $\mathcal{G}$ are, respectively, defined by%
\begin{equation*}
A_{ij} = \left\{
                   \begin{array}{cc}
                    1, & \text{if }ij \in    \mathcal{E}, \\
                    0, & \text{if }ij \notin \mathcal{E},%
                   \end{array}%
\right.\quad \text{and}\quad %
D_{ij} = \left\{
                   \begin{array}{cc}
                    \sum_{j=1}^{n}A_{ij}, & \text{if }i=j,    \\
                    0                   , & \text{if }i\neq j,%
                   \end{array}
                   \right.
\end{equation*}
where $1\leq i,j\leq n$. %
Then the so-called graph Laplacian matrix of $\mathcal{G}$ is defined by $L = D - A$. That is,
\begin{equation}\label{eqn:laplacian}
    L_{ij} = \left\{
                       \begin{array}{cl}
                            D_{ii}, & \text{if }i=j, \\
                            -1, & \text{if }i\text{ and }j\text{ are adjacent,} \\
                            0, & \text{otherwise,}%
                       \end{array}%
                       \right.%
    \ 1\leq i,j\leq n.
\end{equation}
Below, we recall some properties of the graph Laplacian matrix.%
\begin{definition}
  Recall the definitions of the irreducible matrix and the {\normalfont M}-matrix.
  \begin{itemize}
    \item[(i)] A matrix is called an irreducible matrix if it is not similar to a block upper triangular matrix using a permutation matrix.
    \item[(ii)] A square matrix is said to be an {\normalfont M}-matrix if its off-diagonal entries are less than or equal to zero with eigenvalues whose real parts are positive.
  \end{itemize}
\end{definition}

\begin{theorem}[\cite{M12}]\label{thm:laplacian}
Let $L$ be a graph Laplacain matrix induced from a simple, connected and undirected graph. Then
\begin{itemize}
\item[(i)] $L$ is a singular irreducible {\normalfont M}-matrix.

\item[(ii)] $L$ has exactly one zero eigenvalue and the corresponding eigenvector is the all-one vector.
\end{itemize}
\end{theorem}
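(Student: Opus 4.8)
The plan is to derive both items from the Perron--Frobenius theorem applied to a shifted, nonnegative version of $L$. First I would collect the facts that are immediate from \eqref{eqn:laplacian}: every off-diagonal entry of $L$ is $-1$ or $0$, hence nonpositive; and since the $i$-th row sum of $A$ equals $D_{ii}$ by the definition of the degree matrix, $L\mathbf{1} = D\mathbf{1} - A\mathbf{1} = 0$, so the all-one vector $\mathbf{1}$ lies in $\ker L$ and in particular $L$ is singular. For irreducibility I would invoke the standard dictionary between combinatorial and linear-algebraic irreducibility: a matrix is irreducible precisely when the directed graph on $\{1,\dots,n\}$ with an arc $i\to j$ whenever the $(i,j)$ entry is nonzero is strongly connected. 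Since $L$ and $A$ have the same off-diagonal zero pattern, and $A$ is the symmetric adjacency matrix of the connected graph $\mathcal{G}$, that directed graph is strongly connected, so $L$ is irreducible.

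For the M-matrix property and for item (ii), set $m := \max_{1\le i\le n} D_{ii}$ and $B := mI - L = mI - D + A$. Then $B \ge 0$ entrywise---its diagonal entries are $m - D_{ii}\ge 0$ and its off-diagonal entries coincide with those of $A$---and $B$ shares the off-diagonal pattern of $A$, hence is irreducible. Because $B\mathbf{1} = m\mathbf{1} - D\mathbf{1} + A\mathbf{1} = m\mathbf{1}$, the strictly positive vector $\mathbf{1}$ is an eigenvector of the irreducible nonnegative matrix $B$; by Perron--Frobenius the spectral radius $\rho(B)$ is the only eigenvalue admitting a positive eigenvector, it is algebraically simple, and its eigenspace is one-dimensional, whence $\rho(B) = m$ with eigenspace $\mathrm{span}\{\mathbf{1}\}$. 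Transferring back through $\sigma(L) = \{\, m - \mu : \mu \in \sigma(B)\,\}$, and using that $L$ is symmetric (so its eigenvalues are real) together with $|\mu| \le \rho(B) = m$, we conclude that every eigenvalue of $L$ is real and nonnegative, vanishing exactly when $\mu = \rho(B)$. This proves that $L$ is a singular M-matrix, that $0$ is a simple eigenvalue of $L$, and that $\ker L = \mathrm{span}\{\mathbf{1}\}$, which is item (ii).

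The argument is essentially routine, so the only real decision is presentational: the definition of M-matrix stated above asks for eigenvalues with positive real part, which a singular matrix cannot satisfy, so in (i) I would read ``singular M-matrix'' in the usual generalized sense---$L = sI - B$ with $B\ge 0$ and $s = \rho(B)$, the boundary case of the nonsingular definition---and keep the wording consistent with the Perron--Frobenius conclusion just derived. As a self-contained alternative to (ii) that sidesteps Perron--Frobenius, one could instead use the quadratic-form identity $x^\top L x = \sum_{ij\in\mathcal{E}} (x_i - x_j)^2$, which is manifestly nonnegative and vanishes iff $x$ is constant along every edge, hence---since $\mathcal{G}$ is connected---iff $x$ is a multiple of $\mathbf{1}$; this pins down $\ker L$ and the simplicity of the zero eigenvalue directly, and I might record it as a remark. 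I do not anticipate a genuine obstacle; the one point to handle carefully is the identification $\rho(B) = m$, which relies on the Perron--Frobenius fact that an irreducible nonnegative matrix has no positive eigenvector other than the Perron vector.
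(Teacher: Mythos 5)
The paper does not prove Theorem~\ref{thm:laplacian} at all: it is imported verbatim from the cited reference \cite{M12}, so there is no in-paper argument to compare against. Your proof is correct and self-contained. The facts $L\mathbf{1}_n=\mathbf{0}$ and the nonpositivity of the off-diagonal entries are immediate from \eqref{eqn:laplacian}; irreducibility correctly reduces to strong connectivity of the digraph determined by the off-diagonal zero pattern, which is that of $A$ and hence follows from connectedness of $\mathcal{G}$; and the Perron--Frobenius step is sound, since for the irreducible nonnegative matrix $B=mI-L$ the positive vector $\mathbf{1}_n$ can only be an eigenvector for $\rho(B)$, forcing $\rho(B)=m$ with a simple, one-dimensional eigenspace, which transfers to $\sigma(L)\subseteq[0,\infty)$ with $0$ simple and $\ker L=\operatorname{span}\{\mathbf{1}_n\}$. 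You are also right to flag that the paper's Definition of an M-matrix (eigenvalues with positive real part) excludes singular matrices, so item (i) only makes sense under the generalized convention $L=\rho(B)I-B$ with $B\ge 0$; your reading is the standard one and exactly what your construction delivers. Your remarked alternative via $\mathbf{x}^\top L\mathbf{x}=\sum_{ij\in\mathcal{E}}(x_i-x_j)^2$ is in fact the route the paper implicitly leans on elsewhere (e.g., in the proof of Theorem~\ref{thm:invMmat}, where positive semi-definiteness and the uniqueness of the kernel vector are invoked), so recording it would tie your proof to the rest of the text; but the Perron--Frobenius version has the advantage of yielding the M-matrix structure of (i) directly rather than only the spectral statement of (ii).
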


\begin{remark}
If we additionally associate $\mathcal{G}$ with a symmetric nonnegative weight matrix $W \in \mathbb{R}^{n\times n}$ with zero diagonal entries. %
That is, the entry $W_{ij}$ denotes the nonnegative edge weight between vertices $i, j$ if $ij \in \mathcal{E}$ and %
$W_{ij} = 0$ in the case either $i = j$ or vertex $i$ as well as $j$ are not connected. %
At this time, the diagonal degree matrix $D_{w}$ is composed of the total weight on edges connected to each vertex %
and $L_{w} := D_{w} - W$ is called the weighted Laplacian $L_{w}$ associated to $W$. %
It is of course that weights on edges can be signed. This study, we only focus on positive weights. %
To simplify the notation, we only consider $W = A$, that is the case of the classical Laplacian. %
Note that the following discussion is also true for a graph Laplacian matrix with positive edge weights. %
\end{remark}

Finding some smallest positive eigenvalues and the associated eigenvectors of the graph Laplacian matrix \eqref{eqn:laplacian} is a fundamental problem in these applications.
Certainly, there have been many excellent theoretical investigations and numerical algorithms for these subjects %
and these methods, such as the inverse power method \cite{I97} and the Anoldi method as well as the Lanczos method \cite{GvL12}, 
Nevertheless, these methods have to continuously solve linear systems of the rank deficient Laplacian matrix %
generated from a simple, connected and undirected graph. %
A traditional way to address the singularity problem is to add a small diagonal perturbation matrix. %
The same trick will be applied to improve the condition number of the problem, %
through the Tikhonov regularization \cite{TA77} or the ridge regression \cite{N04} %
in the area of statistics and machine learning.
Even the resulting matrix is invertible, %
for large graph such as the social network, however, there is not only local connections between the vertices %
and this implies that it is almost impossible to find matrix factorizations %
for exact solving linear systems involved in an eigensolver. %
That is, only the most scalable algorithms are practical for a large graph Laplacian matrix. %

In this paper we first remedy the problem of singularity of the graph Laplacian matrix. %
Then, we introduce a technique of eigenvalue deflations as shown in \cite{P98,S01,GvL12,HGLY14} so that we can find the desired eigenvalues in order %
and exclude the influence of the convergent ones. %
Finally, we integrate these approaches into the eigensolver called shift-invert residual Arnoldi (SIRA) method \cite{L07,LS07}. %
SIRA is an inner-outer iterative eigensolver and the inner linear system is allowed to be solved with the low accuracy. %
As a result, we propose an inexact inner-outer eigensolver with the cure of sigularity and the aid of deflation.

\subsection{Notations and Overview}\label{subec1.2:notation}

Throughout this paper, capital Roman and Greek letters indicate matrices and lowercase bold face letters denote vectors. %
Lowercase Greek letters are the scalars. $I_{n}$ denotes the $n \times n$ identity matrix with the given size $n$, %
$\mathbf{e}_j$ is the $j$th column of the identity matrix $I_{n}$. The notation $\mathbf{1}_{n}$ denotes a all-one $n$-vector. %
$\mathbf{0}$ represents a zero vector and matrix whose dimension should become evident from the context. %
We adopt the following MATLAB notations:
$\mathbf{v}(i\text{:}j)$ denotes the subvector of the vector $\mathbf{v}$ that consists of the $i$th to the $j$th entries of $\mathbf{v}$. %
Given a matrix $A$, the slice $A(i\text{:}j,:)$ selects the rows $i$ to $j$ and $A(i,:)$ indicates the $i$th row of $A$. %
The notation $\cdot ^{\top}$ denotes the transpose of vectors or matrices. %
Other notations will be clearly defined whenever they are used. %

The rest of this paper is organized as follows. %
In Section~\ref{sec:GLEP}, we revisit the background on the eigenvalue problem of a graph Laplacian matrix $L$ in \eqref{eqn:laplacian}. %
Next, we propose trimming as well as deflating approaches, and review the shift-inverted residual Arnoldi method. %
In Section~\ref{sec:Implement}, %
we integrate the above techniques and present an iterative eigensolver for solving the large and spare graph Laplacain eigenvalue problems. %
Numerical experiments and comparisons are presented in Section~\ref{sec:Numerical}. %
Finally, we end up by discussing some concluding remarks and further works in Section~\ref{sec:Conclusion}.

\section{The Graph Laplacian Eigenvalue Problem}\label{sec:GLEP}

For the graph Laplacian matrix $L$ \eqref{eqn:laplacian} generated from a simple-connected graph with nodes $n$, we study the eigenvalue problem
\begin{equation}\label{eqn:GLEP}
    L\mathbf{x} = \lambda \mathbf{x},
\end{equation}
where $L$ is symmetric positive semi-definite with the unique kernel vector $\mathbf{1}_{n}$ as proposed in Theorem~\ref{thm:laplacian}. %
We are interested in finding some smallest positive eigenvalues and the associated eigenvectors of \eqref{eqn:GLEP}. %

It is well know that the shift-invert spectral transformation \cite{P98,B_00,S01,GvL12} is used to enhance convergence to a desired portion of the spectrum. %
Specifically, to find eigenvalues of $L\mathbf{x} = \lambda \mathbf{x}$ near a target $\sigma$, which is not an eigenvalue of $L$, %
the shift-invert spectral transformation is to consider the corresponding shift-and-invert eigenvalue problem $(L-\sigma I_{n})^{-1}\mathbf{x} = \theta \mathbf{x}$. %
Once we find an aforesaid eigenvalue $\theta$, it can then be transformed back to eigenvalues of the original problem. %
The direct relation is $\lambda = \sigma + 1/\theta$. However, we cannot just apply this technique to the graph Laplacian matrix. %

First note that since $L$ is positive semi-definite, we know that its eigenvalue with the smallest magnitude is $\lambda_0 = 0$.
Secondly, in order to finding some smallest positive eigenvalues of the graph Laplacian matrix $L$, %
the zero-shift $\sigma = 0$, without any additional information, would be an intuitive and appropriate selection. %
This means that we would make use of the zero-shift and consider the invert problem of the singular matrix $L$. %
In this case, we will encounter the problem of solving singular linear systems $L\mathbf{z} = \mathbf{r}$. %
To remedy this defect, we first propose a trimming technique in Section~\ref{sec:trimming}. %

Secondly, from the fact that the convergent ones will influence the convergence process of other eigenvalues that have not yet been captured, %
Section~\ref{sec:deflting} introduces the method of deflation to exclude the impact of convergence from convergent eigenvalues. %

Lastly, although we can always focus on finding the smallest eigenvalue after deflating these convergent eigenvalues, %
shift-invert method still have to be included. %
This is because the next desired eigenvalue is getting farther and farther away from the origin. %
To integrate these techniques, trimming, deflating, and shift-invert is not obvious. The details will be explained in Section~\ref{sec:sidevp}.

\subsection{Trimming Technique}\label{sec:trimming}

To use the invert technique for computing the smallest positive eigenvalues of $L$ is equivalent to find the largest positive one of its inverted. %
Therefore, we have to solve the linear system $L\mathbf{z} = \mathbf{r}$. %
Besides, the vector $\mathbf{z}$ is usual used to expand a search subspace for a project eigensolver (such as the Lanczos method), %
we can further require that $\mathbf{z}$ and $\mathbf{1}_{n}$ are orthogonal to each other. %
If $L\mathbf{z} = \mathbf{r}$ has a solution $\mathbf{z}$ that is perpendicular to $\mathbf{1}_{n}$, then the following result will be obtained.%
\begin{lemma}\label{lem:rortho1}
    The orthogonality requirement of $\mathbf{r}$ and $\mathbf{1}_{n}$  is a necessary condition provided the constrained linear system, %
    \begin{equation*}
      L\mathbf{z} = \mathbf{r} \quad \text{with} \quad \mathbf{1}_{n}^{\top}\mathbf{z} = 0,\ \text{where}\ L^{\top} = L, \ L\mathbf{1}_{n} = \mathbf{0},
    \end{equation*}
    has a solution.
\end{lemma}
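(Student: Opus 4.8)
The plan is to use only the two structural hypotheses on $L$ that appear in the statement — symmetry $L^\top = L$ and the kernel relation $L\mathbf{1}_{n} = \mathbf{0}$ — and to observe that the side constraint $\mathbf{1}_{n}^\top\mathbf{z} = 0$ is actually irrelevant to the \emph{necessity} direction being claimed. So the proof will be short.

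First I would assume the constrained system has a solution $\mathbf{z}$, so in particular $\mathbf{r} = L\mathbf{z}$. Then I would compute $\mathbf{1}_{n}^\top\mathbf{r}$ by transposing $L$ onto the all-one vector: $\mathbf{1}_{n}^\top\mathbf{r} = \mathbf{1}_{n}^\top L\mathbf{z} = (L^\top\mathbf{1}_{n})^\top\mathbf{z} = (L\mathbf{1}_{n})^\top\mathbf{z} = \mathbf{0}^\top\mathbf{z} = 0$, using symmetry to pass from the second to the third expression and $L\mathbf{1}_{n} = \mathbf{0}$ to pass to the fourth. This yields $\mathbf{1}_{n}^\top\mathbf{r} = 0$, i.e. $\mathbf{r}$ is orthogonal to $\mathbf{1}_{n}$, which is exactly the asserted necessary condition. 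Conceptually this is nothing more than the fact that the range of a symmetric matrix is the orthogonal complement of its kernel: $\operatorname{range}(L) = \operatorname{range}(L^\top) = \ker(L)^\perp$, and since $\mathbf{1}_{n}\in\ker(L)$ by Theorem~\ref{thm:laplacian}, every attainable right-hand side $\mathbf{r}\in\operatorname{range}(L)$ must be perpendicular to $\mathbf{1}_{n}$.

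There is no genuine obstacle in this argument; the only point requiring care is not to overstate the conclusion. The lemma claims necessity only, so I would resist proving the converse — that $\mathbf{r}\perp\mathbf{1}_{n}$ in turn guarantees the existence of a solution with $\mathbf{1}_{n}^\top\mathbf{z} = 0$ — even though for this particular $L$ (symmetric with one-dimensional kernel $\operatorname{span}\{\mathbf{1}_{n}\}$) that converse also holds, by choosing the unique solution lying in $\operatorname{range}(L) = \operatorname{span}\{\mathbf{1}_{n}\}^\perp$. If a later section needs that sufficiency for the trimming construction, it would be stated and proved as a separate claim rather than folded into this lemma.
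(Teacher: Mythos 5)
Your argument is correct and is essentially the paper's own proof: the paper likewise writes $\mathbf{r}^{\top}\mathbf{1}_{n} = \mathbf{z}^{\top}L\mathbf{1}_{n} = \mathbf{z}^{\top}\mathbf{0} = 0$, using symmetry of $L$ and $L\mathbf{1}_{n}=\mathbf{0}$ exactly as you do. Your added observation that the constraint $\mathbf{1}_{n}^{\top}\mathbf{z}=0$ plays no role in the necessity direction, and that the converse is deferred to the trimming construction, is accurate but does not change the substance.
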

\begin{proof}
     If the system is consistency, we have $\mathbf{r}^{\top}\mathbf{1}_{n} = \mathbf{z}^{\top}L\mathbf{1}_{n} = \mathbf{z}^{\top}\mathbf{0} = 0$. %
\end{proof}
Overall, we will solve a constrained singular linear system
\begin{equation}\label{eqn:sinsysm}
\left\{
    \begin{array}{l}
        L\mathbf{z} = \mathbf{r} \quad \text{with} \quad \mathbf{1}_{n}^{\top}\mathbf{z} = 0,      \smallskip\\
        \text{where } L^{\top} = L, \ L\mathbf{1}_{n} = \mathbf{0}\ \text{and}\ \mathbf{1}_{n}^{\top}\mathbf{r} = 0.%
    \end{array}%
\right.
\end{equation}
Fortunately, as known in \cite[Theorem~4.31]{Q09}, each principal submatrix of $L$ with order less than $n$ is a nonsingular {\normalfont M}-matrix. %
\begin{theorem}[{\cite[Theorem~4.31~(d)]{Q09}}]\label{thm:invMmat}
  Let $L$ be the graph Laplacian matrix constructed from a simple, connected and undirected graph. Write %
  \begin{equation}\label{eqn:Lhati}
  L =\begin{blockarray}{c@{}ccc@{}cl}
                &                         & {\scriptsize \rotatebox{90}{\begin{tikzpicture}\node (scissors) {\Leftscissors};\end{tikzpicture}}} & &             \\
            \begin{block}{[c@{\hspace{5pt}}ccc@{\hspace{5pt}}c]l}
                & \widehat{L}_{i,1}       & |       & \widehat{L}_{i,2} & &                                                                                     \\
                & \text{---}              & \l_{ii} & \text{---}        & &{\scriptsize \begin{tikzpicture}\node (scissors) {\Leftscissors};\end{tikzpicture}}  \\
                & \widehat{L}_{i,2}^\top  & |       & \widehat{L}_{i,3} & &                                                                                     \\
            \end{block}
         \end{blockarray}
    \quad \text{and} \qquad \widehat{L} =
    \begin{bmatrix}
       \widehat{L}_{i,1}      & \widehat{L}_{i,2} \\
       \widehat{L}_{i,2}^\top & \widehat{L}_{i,3}
    \end{bmatrix}.
  \end{equation}
That is, $\widehat{L}$ represents the submatrix of $L$ with order $n-1$ obtained by removing the $i$th row and column. %
Then the $\widehat{L}$ is a nonsingular {\normalfont M}-matrix.
\end{theorem}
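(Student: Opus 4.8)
The plan is to obtain the two defining properties of a nonsingular M-matrix — nonpositive off-diagonal entries together with eigenvalues having positive real part — essentially for free from the structure $\widehat{L}$ inherits as a principal submatrix of $L$, the only genuine content being the nonsingularity, which I would extract from the one-dimensionality of $\ker L$ recorded in Theorem~\ref{thm:laplacian}.

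First I would collect the inherited properties. Since $\widehat{L}$ is obtained from $L$ by deleting the $i$th row and the $i$th column, it is again symmetric, its off-diagonal entries coincide with the corresponding off-diagonal entries of $L$ and are therefore $\leq 0$, and — being a principal submatrix of the positive semi-definite matrix $L$ — it is itself positive semi-definite. In particular all eigenvalues of $\widehat{L}$ are real and nonnegative, so to finish it suffices to show that $0$ is not an eigenvalue, i.e.\ that $\widehat{L}$ is nonsingular (equivalently, positive definite); then its eigenvalues are real and strictly positive, hence have positive real part, and the definition of a nonsingular M-matrix is met.

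Next comes the nonsingularity. Suppose $\widehat{L}\mathbf{y} = \mathbf{0}$ for some $\mathbf{y}\in\mathbb{R}^{n-1}$. Let $\widetilde{\mathbf{y}}\in\mathbb{R}^{n}$ be the vector obtained by inserting a $0$ in the $i$th coordinate of $\mathbf{y}$; because exactly the $i$th row and column of $L$ have been removed, $\widetilde{\mathbf{y}}^{\top}L\,\widetilde{\mathbf{y}} = \mathbf{y}^{\top}\widehat{L}\,\mathbf{y} = 0$. Since $L$ is positive semi-definite, the standard fact that $\mathbf{v}^{\top}L\mathbf{v}=0$ forces $L\mathbf{v}=\mathbf{0}$ gives $L\widetilde{\mathbf{y}} = \mathbf{0}$, so by Theorem~\ref{thm:laplacian}(ii) we have $\widetilde{\mathbf{y}} \in \mathrm{span}\{\mathbf{1}_{n}\}$. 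But the $i$th entry of $\widetilde{\mathbf{y}}$ is $0$, whence $\widetilde{\mathbf{y}} = \mathbf{0}$ and therefore $\mathbf{y} = \mathbf{0}$. Thus $\ker\widehat{L} = \{\mathbf{0}\}$, $\widehat{L}$ is nonsingular, and combined with the previous paragraph this shows $\widehat{L}$ is a nonsingular M-matrix.

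I do not anticipate a serious obstacle: the entire argument rests on the identification $\ker L = \mathrm{span}\{\mathbf{1}_{n}\}$, and that is precisely where the connectedness of $\mathcal{G}$ enters (a disconnected graph would have a higher-dimensional kernel and the embedded vector $\widetilde{\mathbf{y}}$ need not vanish). Since that identification is already supplied by Theorem~\ref{thm:laplacian}, the only points requiring care are the bookkeeping in the embedding identity $\widetilde{\mathbf{y}}^{\top}L\widetilde{\mathbf{y}} = \mathbf{y}^{\top}\widehat{L}\mathbf{y}$ and the elementary semidefinite-form fact invoked above. If one wants the stronger statement that \emph{every} principal submatrix of $L$ of order $<n$ is a nonsingular M-matrix, it then follows immediately, since any such submatrix is a principal submatrix of the positive definite matrix $\widehat{L}$ and still has nonpositive off-diagonal entries.
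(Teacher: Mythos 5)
Your proof is correct and follows essentially the same route as the paper's: both embed a vector from $\mathbb{R}^{n-1}$ into $\mathbb{R}^{n}$ by inserting a zero in the $i$th coordinate, observe that the quadratic form is preserved, and use $\ker L=\mathrm{span}\{\mathbf{1}_{n}\}$ together with the vanishing $i$th entry to force the vector to be zero, concluding positive definiteness. The only difference is cosmetic — you argue directly from $\ker\widehat{L}$ while the paper phrases it as a contradiction from $\widehat{\mathbf{x}}^{\top}\widehat{L}\widehat{\mathbf{x}}\leq 0$ — and your version actually spells out the intermediate step ($\mathbf{v}^{\top}L\mathbf{v}=0\Rightarrow L\mathbf{v}=\mathbf{0}$) that the paper leaves implicit.
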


\begin{remark}
The index $i$ can be any integer number between $1$ and $n$, and we do not specifically declare $i$. %
Specified index number should, if necessary, be evident from the context.
\end{remark}

\begin{proof}[Proof of Theorem~\ref{thm:invMmat}]
  For the special case that $L$ is a graph Laplacian matrix, we give a compact proof. %
  Clearly, $\widehat{L}$ is a symmetric matrix with nonpositive off-diagonal entries, we next claim that $\widehat{L}$ is positive definite. %
  If not, we then have a nonzero vector $\widehat{\mathbf{x}}\in\mathbb{R}^{n-1}$ %
  such that $\widehat{\mathbf{x}}^\top \widehat{L}\widehat{\mathbf{x}} \leq 0$.
  However, if we consider the corresponding enlarged vector 
  $\mathbf{x}=\big[\widehat{\mathbf{x}}(1:i-1)^\top\ \ 0\ \ \widehat{\mathbf{x}}(i:n-1)^\top\big]^\top$, %
  we will get $\mathbf{x}^\top L\mathbf{x} \leq 0$, which contradicts to the fact $\mathbf{1}_{n}$ %
  is the only one kernel vector of the positive semi-definite matrix $L$. Therefor, we show that $\widehat{L}$ is a nonsingular M-matrix.
\end{proof}

Based on this theorem, we can first solve the an $(n-1) \times (n-1)$ linear system with the trimmed coefficient matrix $\widehat{L}$ in \eqref{eqn:Lhati}, %
and the corresponding right-hand side that removes the $i$th element. After that, it is enlarged to an $n$-vector as a solution of \eqref{eqn:sinsysm}. %
In fact, we have the following theorem. %

\begin{theorem}\label{thm:trimming}
    Let $\widehat{L}$ be the matrix defined in \eqref{eqn:Lhati} and %
    $\widehat{\mathbf{r}}$ be the $(n-1)$-vector obtained from $\widehat{\mathbf{r}}$ by deleting the same $i$th element. %
    If $\widehat{\mathbf{z}}^{\ast}$ is the solution of %
    \begin{equation}\label{eqn:tlinsys}
        \widehat{L}\widehat{\mathbf{z}}=\widehat{\mathbf{r}}.
    \end{equation}
    Then the enlarged $n$-vector %
    \begin{equation}\label{eqn:z_i}\mathbf{z}^{\ast} :=
        \begin{bmatrix}
            \widehat{\mathbf{z}}^{\ast}(1\text{\normalfont{:}}i-1) \\ %
            0 \\ %
            \widehat{\mathbf{z}}^{\ast}\left(i\text{\normalfont{:}}n-1\right)
        \end{bmatrix}
        - \frac{\widehat{\mathbf{1}}_{n}^{\top}\widehat{\mathbf{z}}^{\ast}}{n}\mathbf{1}_{n}
    \end{equation}
    is a solution of \eqref{eqn:sinsysm} that is orthogonal to $\mathbf{1}_{n}$. %
    Here, for consistency of representation, $\widehat{\mathbf{1}}_{n}$ is the all-ones vector with size $n-1$, that is, $\widehat{\mathbf{1}}_{n} = \mathbf{1}_{n-1}$. %
\end{theorem}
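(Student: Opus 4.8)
The plan is to verify directly that the proposed $n$-vector $\mathbf{z}^{\ast}$ satisfies the two requirements of \eqref{eqn:sinsysm}, namely $\mathbf{1}_{n}^{\top}\mathbf{z}^{\ast}=0$ and $L\mathbf{z}^{\ast}=\mathbf{r}$, where throughout $\widehat{\mathbf{r}}$ denotes $\mathbf{r}$ with its $i$th entry deleted. It is convenient to write $\mathbf{z}^{\ast}=\mathbf{y}-\frac{\widehat{\mathbf{1}}_{n}^{\top}\widehat{\mathbf{z}}^{\ast}}{n}\mathbf{1}_{n}$, where $\mathbf{y}=\big[\widehat{\mathbf{z}}^{\ast}(1{:}i-1)^{\top}\ \ 0\ \ \widehat{\mathbf{z}}^{\ast}(i{:}n-1)^{\top}\big]^{\top}$ is the zero-padded enlargement of $\widehat{\mathbf{z}}^{\ast}$ obtained by inserting a $0$ in position $i$. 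Observe first that $\widehat{\mathbf{z}}^{\ast}$ is well defined, since $\widehat{L}$ is nonsingular by Theorem~\ref{thm:invMmat}; this is the only place the M-matrix structure is invoked.

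The orthogonality requirement is quick: inserting a zero entry does not change a coordinate sum, so $\mathbf{1}_{n}^{\top}\mathbf{y}=\widehat{\mathbf{1}}_{n}^{\top}\widehat{\mathbf{z}}^{\ast}$, and hence $\mathbf{1}_{n}^{\top}\mathbf{z}^{\ast}=\widehat{\mathbf{1}}_{n}^{\top}\widehat{\mathbf{z}}^{\ast}-\frac{\widehat{\mathbf{1}}_{n}^{\top}\widehat{\mathbf{z}}^{\ast}}{n}(\mathbf{1}_{n}^{\top}\mathbf{1}_{n})=0$. For the linear equation, because $L\mathbf{1}_{n}=\mathbf{0}$ the rank-one shift term is annihilated, so $L\mathbf{z}^{\ast}=L\mathbf{y}$ and it suffices to show $L\mathbf{y}=\mathbf{r}$. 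Reading off the block partition \eqref{eqn:Lhati}, each of the $n-1$ rows of $L$ other than the $i$th, applied to $\mathbf{y}$, only picks up the entries of $\mathbf{y}$ outside position $i$ through the submatrix $\widehat{L}$ (the $i$th-column contributions are multiplied by the zero entry of $\mathbf{y}$); hence those $n-1$ components of $L\mathbf{y}$ equal $\widehat{L}\widehat{\mathbf{z}}^{\ast}=\widehat{\mathbf{r}}$. Therefore $L\mathbf{y}$ already agrees with $\mathbf{r}$ in every coordinate except possibly the $i$th, and it remains to match that last coordinate, which I denote $\rho:=(L\mathbf{y})_{i}$.

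Identifying $\rho$ is the crux, and the one place where the two orthogonality hypotheses on $\mathbf{r}$ and on $L$ in \eqref{eqn:sinsysm} are genuinely needed; the point is to pin it down without computing the $i$th row of $L$ at all. The idea is that both $L\mathbf{y}$ and $\mathbf{r}$ have vanishing total sum: on one side $\mathbf{1}_{n}^{\top}\mathbf{r}=0$ is assumed, so the $i$th entry of $\mathbf{r}$ equals $-\widehat{\mathbf{1}}_{n}^{\top}\widehat{\mathbf{r}}$; on the other side $\mathbf{1}_{n}^{\top}L\mathbf{y}=(L^{\top}\mathbf{1}_{n})^{\top}\mathbf{y}=(L\mathbf{1}_{n})^{\top}\mathbf{y}=0$, and since the non-$i$ components of $L\mathbf{y}$ sum to $\widehat{\mathbf{1}}_{n}^{\top}\widehat{\mathbf{r}}$, this forces $\rho=-\widehat{\mathbf{1}}_{n}^{\top}\widehat{\mathbf{r}}$ as well. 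Thus $\rho$ coincides with the $i$th entry of $\mathbf{r}$, so $L\mathbf{y}=\mathbf{r}$, and combining this with the second paragraph yields $L\mathbf{z}^{\ast}=\mathbf{r}$ and $\mathbf{1}_{n}^{\top}\mathbf{z}^{\ast}=0$, which is precisely the claim. (Consistency of \eqref{eqn:sinsysm} is not separately in doubt: Lemma~\ref{lem:rortho1} already shows $\mathbf{1}_{n}^{\top}\mathbf{r}=0$ is necessary, and the above construction shows it is sufficient.)
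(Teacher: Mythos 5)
Your proof is correct. It differs from the paper's in organization: the paper first shows that the constrained system \eqref{eqn:sinsysm} is \emph{equivalent} to the modified reduced system $\widehat{L}\big(\widehat{I}_n + \widehat{\mathbf{1}}_{n}\widehat{\mathbf{1}}_{n}^{\top}\big)\widehat{\mathbf{z}} = \widehat{\mathbf{r}}$, inverts the rank-one factor via \eqref{eqn:rank1inv}, and then verifies the two block equations of \eqref{eqn:hateqn} using the identities $\widehat{\bm{\ell}} = -\widehat{L}\widehat{\mathbf{1}}_{n}$ and $l = \widehat{\mathbf{1}}_{n}^{\top}\widehat{L}\widehat{\mathbf{1}}_{n}$. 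You instead verify the candidate $\mathbf{z}^{\ast}$ directly: the zero inserted in position $i$ makes the $n-1$ non-trimmed rows of $L\mathbf{y}$ collapse to $\widehat{L}\widehat{\mathbf{z}}^{\ast}=\widehat{\mathbf{r}}$ with no block algebra, and the remaining $i$th coordinate is pinned down by the conservation argument $\mathbf{1}_{n}^{\top}L\mathbf{y}=0=\mathbf{1}_{n}^{\top}\mathbf{r}$ rather than by computing the $i$th row of $L$. Your route is shorter, handles a general trimming index $i$ without the paper's ``WLOG $i=n$'' reduction, and dispenses with \eqref{eqn:rank1inv} entirely; what it gives up is the equivalence of systems that the paper establishes en route (which shows every $\mathbf{1}_n$-orthogonal solution of \eqref{eqn:sinsysm} arises from \eqref{eqn:tlinsys}, a fact reused in the proof of Theorem~\ref{thm:trimming_1}), whereas you only prove the one direction the theorem actually asserts. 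One cosmetic remark: $L\mathbf{1}_{n}=\mathbf{0}$ is already used when you discard the rank-one shift term, not only in the $i$th-row step, so your claim that the hypotheses on $L$ are ``genuinely needed'' only there is slightly overstated, but this does not affect correctness.
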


\begin{proof}
    For simplicity, we consider the leading principal submatrix of $L$, which is obtained by deleting the last row and column of $L$. %
    Thus, we can have the following partitions:%
    \begin{equation}\label{eqn:Lhzhrh}
        L = \begin{blockarray}{c@{}cc@{\hspace{4pt}}cl}
                & \RIND{n-1}                 & \RIND{1}            & &            \\
            \begin{block}{[c@{\hspace{5pt}}cc@{\hspace{5pt}}c]l}
                & \widehat{L}                & \widehat{\bm{\ell}} & & \RIND{n-1} \\
                & \widehat{\bm{\ell}}^{\top} & l                   & & \RIND{1}   \\
            \end{block}
         \end{blockarray},\quad
         \mathbf{z} = \begin{blockarray}{c@{}c@{\hspace{4pt}}cl}
                            &                      & & \\
                        \begin{block}{[c@{\hspace{5pt}}c@{\hspace{5pt}}c]l}
                            & \widehat{\mathbf{z}} & & \RIND{n-1} \\
                            & \zeta                & & \RIND{1}   \\
                        \end{block}
                      \end{blockarray}
         \quad \text{and} \quad
         \mathbf{r} = \begin{blockarray}{c@{}c@{\hspace{4pt}}cl}
                            &                      & & \\
                        \begin{block}{[c@{\hspace{5pt}}c@{\hspace{5pt}}c]l}
                            & \widehat{\mathbf{r}} & & \RIND{n-1} \\
                            & \rho                 & & \RIND{1}   \\
                        \end{block}
                      \end{blockarray}.
    \end{equation}%
    Then, solving \eqref{eqn:sinsysm} is equivalent to finding an $n$-vector $\mathbf{z} = \Big[ { \widehat{\mathbf{z}} \atop \zeta } \Big]$ satisfying %
    \begin{equation}\label{eqn:hateqn}
        \left\{
            \begin{array}{l}
                \widehat{L}\widehat{\mathbf{z}}               + \widehat{\bm{\ell}}\zeta = \widehat{\mathbf{r}}, \\
                \widehat{\bm{\ell}}^\top\widehat{\mathbf{z}}  + l\zeta                   = \rho\ ( = - \widehat{\mathbf{1}}_{n}^\top \widehat{\mathbf{r}} ), \\
                \zeta = - \widehat{\mathbf{1}}_{n}^\top \widehat{\mathbf{z}}.
            \end{array}
        \right.
    \end{equation}

    As mentioned in Theorem~\ref{thm:invMmat}, we know that $\widehat{L}$ is symmetric positive definite. %
    Notice that from Theorem~\ref{thm:laplacian} $(L\mathbf{1}_{n} = \mathbf{0})$ and the notations introduced in \eqref{eqn:Lhzhrh}, %
    we have $\widehat{\bm{\ell}} = -\widehat{L}\widehat{\mathbf{1}}_{n}$ and $l = - \widehat{\mathbf{1}}_{n}^{\top}\widehat{\bm{\ell}} = \widehat{\mathbf{1}}_{n}^{\top}\widehat{L}\widehat{\mathbf{1}}_{n}$. %
    Hence, \eqref{eqn:sinsysm} (or equivalently, \eqref{eqn:hateqn}) can be simplified by solving the $(n-1) \times (n-1)$ linear system %
    \begin{equation}\label{eqn:timmedlinsym0}
        \widehat{L} \Big(\widehat{I}_n + \widehat{\mathbf{1}}_{n}\widehat{\mathbf{1}}_{n}^{\top}\Big)\widehat{\mathbf{z}} = \widehat{\mathbf{r}}, %
        \quad \text{where}\quad \widehat{I}_n := I_{n-1}.
    \end{equation}
    Let $\widehat{\mathbf{z}}^{\ast} \in \mathbb{R}^{n-1}$ be the solution of $\widehat{L}\widehat{\mathbf{z}} = \widehat{\mathbf{r}}$, %
    then, from the observation %
    \begin{equation}\label{eqn:rank1inv}
        \left(\widehat{I}_n + \widehat{\mathbf{1}}_{n}\widehat{\mathbf{1}}_{n}^{\top}\right) %
        \left(\widehat{I}_n - \tfrac{1}{n}\widehat{\mathbf{1}}_{n}\widehat{\mathbf{1}}_{n}^{\top}\right) = \widehat{I}_{n}\ \Leftrightarrow\ %
        \left(\widehat{I}_n + \widehat{\mathbf{1}}_{n}\widehat{\mathbf{1}}_{n}^{\top}\right)^{-1} = %
              \widehat{I}_n - \tfrac{1}{n}\widehat{\mathbf{1}}_{n}\widehat{\mathbf{1}}_{n}^{\top},
    \end{equation}
    we know that $\widehat{\mathbf{z}}^{\ast} - \tfrac{\widehat{\mathbf{1}}_{n}^{\top}\widehat{\mathbf{z}}^{\ast}}{n}\widehat{\mathbf{1}}_{n}$ is the solution of %
    \eqref{eqn:timmedlinsym0}.
    Next, we use $\widehat{\mathbf{z}}^{\ast}$ to construct an $n$-vector $\mathbf{z}^{\ast}$ defined by %
    \begin{equation}\label{eqn:z}
        \mathbf{z}^{\ast} :=
            \begin{bmatrix} \widehat{\mathbf{z}}^{\ast} \\ 0 \end{bmatrix} - \zeta^{\ast}\mathbf{1}_{n}  \quad \text{with} \quad%
            \zeta^{\ast} = \frac{\widehat{\mathbf{1}}_{n}^{\top}\widehat{\mathbf{z}}^{\ast}}{n},
    \end{equation}
    then, noting that $\widehat{\bm{\ell}} = -\widehat{L}\widehat{\mathbf{1}}_{n}$ and $l = - \widehat{\mathbf{1}}_{n}^{\top}\widehat{\bm{\ell}} = \widehat{\mathbf{1}}_{n}^{\top}\widehat{L}\widehat{\mathbf{1}}_{n}$ , we can deduce that %
    \begin{itemize}
        \item[(i)] $\widehat{L}(\widehat{\mathbf{z}}^{\ast} - \zeta^{\ast}\widehat{\mathbf{1}}_{n}) %
                        + \widehat{\bm{\ell}}(-\zeta^{\ast}) %
                    = \widehat{L}\widehat{\mathbf{z}}^{\ast} = \widehat{\mathbf{r}}$;
        \item[(ii)] $\widehat{\bm{\ell}}^\top(\widehat{\mathbf{z}}^{\ast} - \zeta^{\ast}\widehat{\mathbf{1}}_{n}) %
                     + l(-\zeta^{\ast}) %
                     = - \widehat{\mathbf{1}}_{n}^{\top}\widehat{L}\widehat{\mathbf{z}}^{\ast} %
                     = - \widehat{\mathbf{1}}_{n}^{\top}\widehat{\mathbf{r}} = \rho$.
    \end{itemize}
    Thus, we can conclude, from the above deduction, that $\mathbf{z}^{\ast}$ is a solution of the linear system \eqref{eqn:sinsysm} %
    satisfying the orthogonal condition $\mathbf{1}_{n}^{\top}\mathbf{z}^{\ast} = 0$.
\end{proof}

\subsection{Deflation Technique}\label{sec:deflting}

Deflating converged eigenpairs is commonly used for solving the eigenvalue problems \cite{P98,S01,GvL12,HGLY14}. %
By deflating the approximate eigenpairs that we captured, %
we can always focus on finding the smallest positive eigenvalue and its corresponding eigenvector. %

Concretely, suppose that we have some converged positive eigenvalues associated with unit eigenvectors, %
say $(\lambda_1,\mathbf{v}_1),\ldots,(\lambda_c,\mathbf{v}_c)$. Note that $\lambda_0 = 0$. 
Consider the deflating matrix $\widetilde{L}$ defined by
\begin{equation}\label{eqn:deflating}
  \widetilde{L} := L + \delta VV^\top, \ \text{where}\ %
  \delta > 0 \text{ and } V = \begin{bmatrix} \mathbf{v}_1 & \cdots & \mathbf{v}_c \end{bmatrix}\in \mathbb{R}^{n \times c}.
\end{equation}
The constant $\delta > 0$ indicates how far we throw the captured eigenvalues.

\begin{theorem}\label{thm:deflating}
For the deflating graph Laplacian matrix $\widetilde{L}$ defined in \eqref{eqn:deflating}, we know that
    \begin{itemize}
        \item[(i)]  $\widetilde{L}$ is symmetric positive semi-definite with $\text{\normalfont{null}}(\widetilde{L}) = \text{\normalfont{span}}\{\mathbf{1}_{n}\}$.
        \item[(ii)] The deflated eigenvalue problem,
                     \begin{equation}\label{eqn:DGLEP}
                        \widetilde{L}\mathbf{x} = (L + \delta VV^\top)\mathbf{x} = \lambda \mathbf{x},
                     \end{equation}
                     preserves all eigenpairs of the original eigenvalue problem except for $\mu \in \{\lambda_1,\ldots,\lambda_c\}$. %
                     Instead, these eigenvalues are transformed into $\lambda_j + \delta $, which is a eigenvalue of \eqref{eqn:DGLEP} %
                     associated with the same eigenvector $\mathbf{v}_j$, $j = 1, \ldots, c$.
    \end{itemize}
\end{theorem}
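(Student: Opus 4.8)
The plan is to handle the two parts separately, both resting on one elementary fact: since each converged pair $(\lambda_j,\mathbf{v}_j)$ has $\lambda_j>0$ while $\mathbf{1}_{n}$ spans the kernel of the symmetric matrix $L$ (Theorem~\ref{thm:laplacian}), every $\mathbf{v}_j$ is orthogonal to $\mathbf{1}_{n}$; I would also assume, as is standard for eigenvectors returned by a symmetric eigensolver (and as the notation $V=[\mathbf{v}_1\ \cdots\ \mathbf{v}_c]$ suggests), that the $\mathbf{v}_j$ are orthonormal, so that $V^{\top}V=I_c$ and $V^{\top}\mathbf{1}_{n}=\mathbf{0}$. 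For part (i), symmetry of $\widetilde{L}=L+\delta VV^{\top}$ is immediate from $L^{\top}=L$ and $(VV^{\top})^{\top}=VV^{\top}$, and for any $\mathbf{x}\in\mathbb{R}^n$,
\[
\mathbf{x}^{\top}\widetilde{L}\mathbf{x}=\mathbf{x}^{\top}L\mathbf{x}+\delta\,\|V^{\top}\mathbf{x}\|_2^2\ge 0,
\]
using $\delta>0$ and $L\succeq 0$, so $\widetilde{L}\succeq 0$. For the kernel, $\widetilde{L}\mathbf{1}_{n}=L\mathbf{1}_{n}+\delta V(V^{\top}\mathbf{1}_{n})=\mathbf{0}$ gives $\mathrm{span}\{\mathbf{1}_{n}\}\subseteq\mathrm{null}(\widetilde{L})$; conversely $\widetilde{L}\mathbf{x}=\mathbf{0}$ forces $\mathbf{x}^{\top}\widetilde{L}\mathbf{x}=0$, hence $\mathbf{x}^{\top}L\mathbf{x}=0$ and $V^{\top}\mathbf{x}=\mathbf{0}$, and the first condition together with the one-dimensional kernel of $L$ yields $\mathbf{x}\in\mathrm{span}\{\mathbf{1}_{n}\}$. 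Thus $\mathrm{null}(\widetilde{L})=\mathrm{span}\{\mathbf{1}_{n}\}$.

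For part (ii), I would fix an orthonormal eigenbasis $\{\mathbf{q}_1,\dots,\mathbf{q}_n\}$ of $L$ that contains $\mathbf{1}_{n}/\sqrt{n}$ and the vectors $\mathbf{v}_1,\dots,\mathbf{v}_c$ (possible since these are already orthonormal eigenvectors). If $\mathbf{q}_k\notin\{\mathbf{v}_1,\dots,\mathbf{v}_c\}$, orthonormality gives $V^{\top}\mathbf{q}_k=\mathbf{0}$, so $\widetilde{L}\mathbf{q}_k=L\mathbf{q}_k=\mu_k\mathbf{q}_k$ and the eigenpair survives unchanged; in particular all eigenpairs of \eqref{eqn:GLEP} other than those attached to the deflated vectors are preserved. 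If $\mathbf{q}_k=\mathbf{v}_j$, then $V^{\top}\mathbf{v}_j=\mathbf{e}_j$ and
\[
\widetilde{L}\mathbf{v}_j=L\mathbf{v}_j+\delta V\mathbf{e}_j=\lambda_j\mathbf{v}_j+\delta\mathbf{v}_j=(\lambda_j+\delta)\mathbf{v}_j,
\]
so $\lambda_j$ is shifted to $\lambda_j+\delta$ with eigenvector still $\mathbf{v}_j$. Since $\widetilde{L}$ is symmetric of order $n$, the $n$ pairs just exhibited are its full spectrum, which is exactly the assertion.

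The computations above are routine; the only point that requires care is the orthogonality bookkeeping — namely that $V^{\top}\mathbf{1}_{n}=\mathbf{0}$ keeps the kernel intact, that $V^{\top}\mathbf{v}_j=\mathbf{e}_j$ produces the clean shift, and that $V^{\top}\mathbf{q}_k=\mathbf{0}$ for the undeflated basis vectors. I expect the main subtlety to be phrasing part (ii) precisely when some $\lambda_j$ are repeated or coincide for different indices $j$: the statement that ``$\mu\in\{\lambda_1,\dots,\lambda_c\}$ is transformed into $\lambda_j+\delta$'' is most safely read at the level of the chosen eigenvectors $\mathbf{v}_j$ rather than the eigenvalues themselves, and I would spell this out at the outset so that ``preserves all eigenpairs except'' is unambiguous.
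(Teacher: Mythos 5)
Your proof is correct and follows essentially the same route as the paper: symmetry and positive semi-definiteness of the sum for (i), and the orthogonality facts $V^{\top}\mathbf{u}=\mathbf{0}$ for uncaptured eigenvectors together with the direct computation $\widetilde{L}\mathbf{v}_j=(\lambda_j+\delta)\mathbf{v}_j$ for (ii). You are in fact somewhat more complete than the paper, which omits the reverse inclusion $\mathrm{null}(\widetilde{L})\subseteq\mathrm{span}\{\mathbf{1}_{n}\}$ and the full-eigenbasis accounting for repeated eigenvalues that you supply.
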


\begin{proof}
    \begin{itemize}
        \item[(i)] It is obvious that $\widetilde{L}$ is symmetric and positive semi-definite from the facts that %
        $L$ as well as $\delta VV^\top$ are both symmetric and positive semi-definite matrices. %
        Note that columns of $V$ are eigenvectors of $L$ corresponding positive eigenvalues so that we have $V^{\top}\mathbf{1}_{n} = \mathbf{0}$.

        \item[(ii)] Observe that if $(\mu, \mathbf{u})$ is an eigenpair of $L$ with $\mu \notin \{\lambda_1,\ldots,\lambda_c\}$ then $V^\top\mathbf{u} = \mathbf{0}$ %
        since eigenvectors of a real symmetric matrix corresponding to different eigenvalues are orthogonal to each other. %
        Thus, we have $\widetilde{L}\mathbf{u} = L\mathbf{u} = \mu\mathbf{u}$ which implies that $(\mu, \mathbf{u})$ is also an eigenpair of $\widetilde{L}$. %
        In addition, for each captured eigenpair $(\lambda_j, \mathbf{v}_j)$, the pair $(\lambda_j + \delta, \mathbf{v}_j)$ will be an eigenpair of $\widetilde{L}$ since %
        \begin{equation*}
            \widetilde{L}\mathbf{v}_j = L\mathbf{v}_j + \delta VV^\top\mathbf{v}_j 
                                      = (\lambda_j + \delta)\mathbf{v}_j, %
            \quad j = 1, \ldots, c.
        \end{equation*}
    \end{itemize}
\end{proof}

\subsection{Shift-Invert of the Deflating GLEP \eqref{eqn:DGLEP}}\label{sec:sidevp}

In spite of the deflation technique allows us to throw away convergent eigenvalues and focus on the smallest positive one instead, %
the trick of shift-invert is still needed because the next interested eigenvalue is farther away from the zero. %

To the end, under the same assumptions in Section~\ref{sec:deflting}, we eventually need to deal with an eigenvalue problem of the form:
\begin{equation}\label{eqn:sdsep}
    (L + \delta VV^\top - \sigma I)^{-1} \mathbf{x} = \theta\mathbf{x},
\end{equation}
where $\delta > 0$ is a constant, $V\in \mathbb{R}^{n \times c}$ is given as in \eqref{eqn:deflating} while %
$\sigma$ is an appropriate shift value that close to the smallest positive eigenvalue of $L + \delta VV^\top$, i.e., $0 < \sigma \approx \lambda_{c+1}$. %
Note that $\mathbf{1}_{n}$ is also the kernel vector of $\widetilde{L}$ (cf. Theorem~\ref{thm:deflating}). %
In this case, the relating linear system is
\begin{equation}\label{eqn:sdlinsys}
\left\{
    \begin{array}{l}
        (L + \delta VV^\top - \sigma I)\mathbf{z} = \mathbf{r} \quad \text{with} \quad \mathbf{1}_{n}^{\top}\mathbf{z} = 0, \smallskip\\
        \text{where } L^{\top} = L, \ L\mathbf{1}_{n} = \mathbf{0},\ V^\top\mathbf{1}_{n} = \mathbf{0},\ \text{and}\ \mathbf{1}_{n}^{\top}\mathbf{r} = 0.%
    \end{array}%
\right.
\end{equation}

\begin{remark}\label{rmk:rortho1}
  With the requirement that $\mathbf{z}$ is perpendicular to $\mathbf{1}_{n}$, the same arguments as in Lemma~\ref{lem:rortho1} show that %
  $\mathbf{1}_{n}^{\top}\mathbf{r} = 0$ is also a necessary condition provided \eqref{eqn:sdlinsys} is a consistent system. %
  See also \eqref{eqn:sinsysm}.
\end{remark}

As mentioned above, to find the smallest deflated eigenvalue problem \eqref{eqn:DGLEP}, the linear system of the form \eqref{eqn:sdlinsys} needs to be solved.
Similar to the results proposed in Section~\ref{sec:trimming}, we have the following theorem %
when the trimming technique is applied to a shift-invert deflating eigenvalue problem \eqref{eqn:sdsep}. %

\begin{remark}
To maintain consistency and conciseness of notations, we use $\widehat{I}_n$ to denote the identity matrix of order $n-1$ and %
$\widehat{\mathbf{1}}_{n}$ is the all-one vector with dimension $n-1$.
\end{remark}

\begin{theorem}\label{thm:trimming_1}
    Suppose $\widehat{\mathbf{z}}^{\ast}\in \mathbb{R}^{n-1}$ is the solution of the linear system %
    \begin{equation}\label{eqn:dlinsys_i}
        \left( \left(\widehat{L} - \sigma \widehat{I}_n \right) + \tfrac{\sigma}{n} \widehat{\mathbf{1}}_{n}\widehat{\mathbf{1}}_{n}^{\top} %
         + \delta\widehat{V} \widehat{V}^{\top} \right) \widehat{\mathbf{z}} = \widehat{\mathbf{r}},
    \end{equation}
    where $\widehat{L}$ is a matrix of order $n-1$ as in \eqref{eqn:Lhati} with a specific index $i$, %
    \begin{equation*}
    \widehat{V} = \begin{bmatrix} V(1\text{\normalfont{:}}i-1,:) \\ V(i+1\text{\normalfont{:}}n,:) \end{bmatrix}\in \mathbb{R}^{(n-1)\times c}
    \quad \text{and} \quad
    \widehat{\mathbf{r}} = \begin{bmatrix} \mathbf{r}(1\text{\normalfont{:}}i-1) \\ \mathbf{r}(i+1\text{\normalfont{:}}n) \end{bmatrix}\in \mathbb{R}^{n-1}
    \end{equation*}
    are, respectively, the matrix and vector obtained from $V$ in \eqref{eqn:deflating} and $\mathbf{r}$ in \eqref{eqn:sdlinsys} by deleting the $i$th row individually.
    Then, the $n$-vector $\mathbf{z}^{\ast}$,%
    \begin{equation*}\mathbf{z}^{\ast} :=
        \begin{bmatrix}
            \widehat{\mathbf{z}}^{\ast}(1\text{\normalfont{:}}i-1) \\ %
            0 \\ %
            \widehat{\mathbf{z}}^{\ast} (i\text{\normalfont{:}}n-1)
        \end{bmatrix}
        - \frac{\widehat{\mathbf{1}}_{n}^{\top}\widehat{\mathbf{z}}^{\ast}}{n}\mathbf{1}_{n},
    \end{equation*}
    with the same form constructed in \eqref{eqn:z_i}, is a solution of \eqref{eqn:sdlinsys} with $\mathbf{1}_{n}^{\top}\mathbf{z}^{\ast} = 0$.
\end{theorem}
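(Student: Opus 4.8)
The plan is to mirror the proof of Theorem~\ref{thm:trimming}, carrying the extra terms $-\sigma I$ and $\delta VV^{\top}$ through the same bordering argument. As there it suffices to treat $i=n$ (a general index is the symmetric permutation of this case, which fixes $\mathbf{1}_{n}$ and preserves every hypothesis of \eqref{eqn:sdlinsys}). Partitioning as in \eqref{eqn:Lhzhrh},
\[
 L=\begin{bmatrix}\widehat{L}&\widehat{\bm{\ell}}\\ \widehat{\bm{\ell}}^{\top}&l\end{bmatrix},\qquad
 V=\begin{bmatrix}\widehat{V}\\ \bm{v}^{\top}\end{bmatrix},\qquad
 \mathbf{z}=\begin{bmatrix}\widehat{\mathbf{z}}\\ \zeta\end{bmatrix},\qquad
 \mathbf{r}=\begin{bmatrix}\widehat{\mathbf{r}}\\ \rho\end{bmatrix},
\]
with $\widehat{V}\in\mathbb{R}^{(n-1)\times c}$ and $\bm{v}\in\mathbb{R}^{c}$ the transpose of the deleted row $V(n,:)$. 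From $L\mathbf{1}_{n}=\mathbf{0}$ I get, as before, $\widehat{\bm{\ell}}=-\widehat{L}\widehat{\mathbf{1}}_{n}$ and $l=\widehat{\mathbf{1}}_{n}^{\top}\widehat{L}\widehat{\mathbf{1}}_{n}$, and the \emph{new} relation $V^{\top}\mathbf{1}_{n}=\mathbf{0}$ (cf.\ Theorem~\ref{thm:deflating}) forces $\bm{v}=-\widehat{V}^{\top}\widehat{\mathbf{1}}_{n}$. Writing $P:=\widehat{L}+\delta\widehat{V}\widehat{V}^{\top}$ (symmetric, of order $n-1$), these three identities collapse the off-diagonal and corner blocks of the bordered matrix so that
\[
 L+\delta VV^{\top}-\sigma I=\begin{bmatrix}P-\sigma\widehat{I}_{n}& -P\widehat{\mathbf{1}}_{n}\\ -\widehat{\mathbf{1}}_{n}^{\top}P& \widehat{\mathbf{1}}_{n}^{\top}P\widehat{\mathbf{1}}_{n}-\sigma\end{bmatrix}.
\]

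Consequently \eqref{eqn:sdlinsys} is equivalent to the two block equations read off from this bordering together with $\zeta=-\widehat{\mathbf{1}}_{n}^{\top}\widehat{\mathbf{z}}$ (the analogue of \eqref{eqn:hateqn}), while the consistency hypothesis $\mathbf{1}_{n}^{\top}\mathbf{r}=0$ reads $\rho=-\widehat{\mathbf{1}}_{n}^{\top}\widehat{\mathbf{r}}$. I would then substitute the claimed $\mathbf{z}^{\ast}$ directly: with $\zeta^{\ast}:=\widehat{\mathbf{1}}_{n}^{\top}\widehat{\mathbf{z}}^{\ast}/n$ its blocks are $\widehat{\mathbf{z}}^{\ast}-\zeta^{\ast}\widehat{\mathbf{1}}_{n}$ and $-\zeta^{\ast}$. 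In the top block the $P\widehat{\mathbf{1}}_{n}$ contributions coming from $(P-\sigma\widehat{I}_{n})(-\zeta^{\ast}\widehat{\mathbf{1}}_{n})$ and from $(-P\widehat{\mathbf{1}}_{n})(-\zeta^{\ast})$ cancel, leaving exactly $(P-\sigma\widehat{I}_{n})\widehat{\mathbf{z}}^{\ast}+\tfrac{\sigma}{n}\widehat{\mathbf{1}}_{n}\widehat{\mathbf{1}}_{n}^{\top}\widehat{\mathbf{z}}^{\ast}$, i.e.\ the left-hand side of \eqref{eqn:dlinsys_i}, hence $\widehat{\mathbf{r}}$. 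An analogous expansion reduces the bottom block to $-\widehat{\mathbf{1}}_{n}^{\top}P\widehat{\mathbf{z}}^{\ast}+\sigma\zeta^{\ast}$; on the other hand, multiplying \eqref{eqn:dlinsys_i} on the left by $\widehat{\mathbf{1}}_{n}^{\top}$ and using $\widehat{\mathbf{1}}_{n}^{\top}\widehat{\mathbf{1}}_{n}=n-1$ gives $\widehat{\mathbf{1}}_{n}^{\top}\widehat{\mathbf{r}}=\widehat{\mathbf{1}}_{n}^{\top}P\widehat{\mathbf{z}}^{\ast}-\sigma\zeta^{\ast}$, so the bottom block equals $-\widehat{\mathbf{1}}_{n}^{\top}\widehat{\mathbf{r}}=\rho$. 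Finally $\mathbf{1}_{n}^{\top}\mathbf{z}^{\ast}=\widehat{\mathbf{1}}_{n}^{\top}\widehat{\mathbf{z}}^{\ast}-n\zeta^{\ast}=0$ by the choice of $\zeta^{\ast}$, which simultaneously confirms the third equation $\zeta=-\widehat{\mathbf{1}}_{n}^{\top}\widehat{\mathbf{z}}$ for the blocks of $\mathbf{z}^{\ast}$.

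The only step that is not routine bookkeeping is seeing why \eqref{eqn:dlinsys_i} has precisely that form, in particular the rank-one term $\tfrac{\sigma}{n}\widehat{\mathbf{1}}_{n}\widehat{\mathbf{1}}_{n}^{\top}$. Eliminating $\zeta$ naively from the three equations yields the reduced but \emph{non-symmetric} system $\big[P(\widehat{I}_{n}+\widehat{\mathbf{1}}_{n}\widehat{\mathbf{1}}_{n}^{\top})-\sigma\widehat{I}_{n}\big]\widehat{\mathbf{z}}=\widehat{\mathbf{r}}$; one then invokes the rank-one inverse identity \eqref{eqn:rank1inv} together with the substitution $\widehat{\mathbf{z}}=(\widehat{I}_{n}-\tfrac{1}{n}\widehat{\mathbf{1}}_{n}\widehat{\mathbf{1}}_{n}^{\top})\widehat{\mathbf{z}}^{\ast}$ to pass to the symmetric form \eqref{eqn:dlinsys_i}, and that same substitution is exactly what turns $\widehat{\mathbf{z}}^{\ast}$ into the $\widehat{\mathbf{z}}$-block $\widehat{\mathbf{z}}^{\ast}-\zeta^{\ast}\widehat{\mathbf{1}}_{n}$ of $\mathbf{z}^{\ast}$. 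Presenting the proof through the direct substitution above circumvents spelling this out, but I expect the arithmetic in the bottom-block check — the $\tfrac{\sigma}{n}$ versus $\tfrac{\sigma(n-1)}{n}$ cancellation — to be the fiddliest point: it is precisely the shift $\sigma$ that destroys the clean factorization $\widehat{L}(\widehat{I}_{n}+\widehat{\mathbf{1}}_{n}\widehat{\mathbf{1}}_{n}^{\top})$ available in Theorem~\ref{thm:trimming} and forces this extra rank-one correction.
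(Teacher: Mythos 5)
Your proof is correct, and every computation checks out: the bordered form of $L+\delta VV^{\top}-\sigma I$ with $P=\widehat{L}+\delta\widehat{V}\widehat{V}^{\top}$ is right (the identity $\bm{v}=-\widehat{V}^{\top}\widehat{\mathbf{1}}_{n}$ from $V^{\top}\mathbf{1}_{n}=\mathbf{0}$ is exactly the new ingredient needed), the top-block cancellation leaves precisely the left-hand side of \eqref{eqn:dlinsys_i}, and the bottom-block verification via left-multiplying \eqref{eqn:dlinsys_i} by $\widehat{\mathbf{1}}_{n}^{\top}$ (with the $\sigma n$ versus $\sigma(n-1)$ bookkeeping) is sound. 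Your route differs from the paper's in organization: the paper does not extend the raw bordering of Theorem~\ref{thm:trimming} but instead conjugates by the elementary matrix $J=I+\mathbf{e}_{n}\mathbf{1}_{n}^{\top}-\mathbf{e}_{n}\mathbf{e}_{n}^{\top}$, which turns the last equation into the constraint $-\sigma(\widehat{\mathbf{1}}_{n}^{\top}\widehat{\mathbf{z}}+\zeta)=0$ and yields an equivalence chain ending in \eqref{eqn:dlinsys_i} composed with the factor $\widehat{I}_{n}+\widehat{\mathbf{1}}_{n}\widehat{\mathbf{1}}_{n}^{\top}$, from which the solution is read off via \eqref{eqn:rank1inv}. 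The paper's chain buys a genuine equivalence between the constrained $n\times n$ system and the reduced $(n-1)\times(n-1)$ one, and it makes the bottom equation trivially satisfied rather than requiring your separate left-multiplication argument; your direct substitution is more elementary, proves exactly the one implication the theorem asserts, and avoids having to justify the ``extraction of the factor'' step that the paper handles somewhat tersely. Your closing observation about why the shift forces the $\tfrac{\sigma}{n}\widehat{\mathbf{1}}_{n}\widehat{\mathbf{1}}_{n}^{\top}$ correction (the symmetrization of the non-symmetric reduced operator $P(\widehat{I}_{n}+\widehat{\mathbf{1}}_{n}\widehat{\mathbf{1}}_{n}^{\top})-\sigma\widehat{I}_{n}$) is accurate and is precisely the content of the last step of the paper's equivalence chain.
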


\begin{remark}\label{rem:linsys}
    If the matrix $V$ is empty and the shift $\sigma$ is zero simultaneously, %
    the linear system \eqref{eqn:sdlinsys} that we are dealing with goes back to the problem \eqref{eqn:sinsysm} so that %
    these two systems \eqref{eqn:dlinsys_i} and \eqref{eqn:tlinsys} are exact the same one. %
    Moreover, if anything about the smallest positive eigenvalue is known in advance, %
    which means if we can have a particular $\sigma > 0$ for finding the smallest positive value, %
    the linear system \eqref{eqn:tlinsys} should be, according to \eqref{eqn:dlinsys_i}, modified by
    \begin{equation}\label{eqn:trim_sigma}
    \left(\Big(\widehat{L} - \sigma \widehat{I}_n\Big) + \tfrac{\sigma}{n}\widehat{\mathbf{1}}_{n}\widehat{\mathbf{1}}_{n}^{\top} \right) %
    \widehat{\mathbf{z}} = \widehat{\mathbf{r}}. %
    \end{equation}
\end{remark}

\begin{proof}[Proof of Theorem~\ref{thm:trimming_1}]
    By convention, we also delete the last row and column of $\widetilde{L}$. %
    That is, $\widehat{L}$, $\widehat{\mathbf{z}}$ $\zeta$, $\widehat{\mathbf{r}}$ and $\rho$ are defined as in \eqref{eqn:Lhzhrh} with dimension $n-1$, %
    and denote $\widehat{V}\in \mathbb{R}^{(n-1)\times c}$ the matrix obtained from $V$ in \eqref{eqn:deflating} by deleting the last row. Set %
    \begin{equation*}\label{trimmedmat}
        J = I + \mathbf{e}_{n}\mathbf{1}_{n}^{\top} - \mathbf{e}_{n}\mathbf{e}_{n}^{\top}
        = \begin{bmatrix}
                1      & 0       & \cdots  & 0       & 0 \\
                0      & 1       & \cdots  & 0       & 0 \\
                0      & 0       & \ddots  & \vdots  & \vdots  \\
                \vdots & \vdots  &         & 1       & 0 \\
                1      & 1       & \cdots  & 1       & 1%
            \end{bmatrix}\in\mathbb{R}^{n \times n}
    \end{equation*} %
    so that we have $J^{-\top} = I_{n}  - \mathbf{1}_{n}\mathbf{e}_{n}^{\top} + \mathbf{e}_{n}\mathbf{e}_{n}^{\top}$. %
    Then, we can verify, under the assumptions in \eqref{eqn:sinsysm} and the particular requirement %
    $0 = \mathbf{1}_{n}^{\top}\mathbf{z} = \widehat{\mathbf{1}}_{n}^{\top}\widehat{\mathbf{z}} + \zeta$, that %
\begin{align}
    \left( L + \delta VV^{\top} - \sigma I_{n} \right) \mathbf{z} = \mathbf{r} %
    \Leftrightarrow& \left. \left( J \left( L + \delta VV^{\top} - \sigma I_{n} \right) J^{\top} \right) \left( J^{-\top}\mathbf{z} \right) = J \mathbf{r}\right. \notag\\
    \Leftrightarrow& \begin{pmat}[{|.}]
                        \widehat{L} + \delta\widehat{V}\widehat{V}^{\top} - \sigma \widehat{I}_n & -\sigma\widehat{\mathbf{1}}_{n} \cr \-
                        -\sigma \widehat{\mathbf{1}}_{n}^{\top} & -\sigma n \cr
                     \end{pmat} \begin{pmat}[{.}] \widehat{\mathbf{z}}-\zeta \widehat{\mathbf{1}}_{n} \cr \- \zeta \cr \end{pmat}
                    = \begin{pmat}[{.}] \widehat{\mathbf{r}} \cr \- 0 \cr \end{pmat}   \notag\\
    \Leftrightarrow& \begin{pmat}[{.}]
                        \left( \widehat{L} + \delta\widehat{V}\widehat{V}^{\top} - \sigma \widehat{I}_n \right)
                        \left( \widehat{\mathbf{z}}-\zeta \widehat{\mathbf{1}}_{n} \right) - \sigma \zeta \widehat{\mathbf{1}}_{n} \cr \-
                        -\sigma \left( \widehat{\mathbf{1}}_{n}^{\top}\widehat{\mathbf{z}} + \zeta \right)\cr
                       \end{pmat}
                    = \begin{pmat}[{.}] \widehat{\mathbf{r}} \cr \- 0 \cr \end{pmat} \notag \\
    \Leftrightarrow& \Big(\widehat{L} + \delta\widehat{V}\widehat{V}^{\top} - \sigma \widehat{I}_n \Big) %
                    \Big(\widehat{I}_n + \widehat{\mathbf{1}}_{n}\widehat{\mathbf{1}}_{n}^{\top}\Big)\widehat{\mathbf{z}} +%
                    \sigma \widehat{\mathbf{1}}_{n}\widehat{\mathbf{1}}_{n}^{\top}\widehat{\mathbf{z}} = \widehat{\mathbf{r}} \notag \\
    \Leftrightarrow& \left(\Big(\widehat{L} - \sigma \widehat{I}\Big) + \tfrac{\sigma}{n}\widehat{\mathbf{1}}_{n} \widehat{\mathbf{1}}_{n}^{\top} %
                           + \delta\widehat{V}\widehat{V}^{\top} \right) %
                           \big(\widehat{I}_n + \widehat{\mathbf{1}}_{n}\widehat{\mathbf{1}}_{n}^{\top}\big)\widehat{\mathbf{z}} %
                           = \widehat{\mathbf{r}}. \notag
\end{align}
The last equation holds thanks to the equality \eqref{eqn:rank1inv} which allows to extract the factor %
$\big(\widehat{I}_n + \widehat{\mathbf{1}}_{n}\widehat{\mathbf{1}}_{n}^{\top}\big)\widehat{\mathbf{z}}$. %
As in the proof of Theorem~\ref{thm:trimming}, we see that if $\widehat{\mathbf{z}}^{\ast}$ is the solution of
\begin{equation*}
    \left(\Big(\widehat{L} - \sigma \widehat{I}_n\Big) + \tfrac{\sigma}{n}\widehat{\mathbf{1}}_{n} \widehat{\mathbf{1}}_{n}^{\top} +%
          \delta\widehat{V}\widehat{V}^{\top}  \right) \widehat{\mathbf{z}} = \widehat{\mathbf{r}},
\end{equation*}
then the $n$-vector \eqref{eqn:z} will be a solution of \eqref{eqn:sdlinsys} satisfying $\mathbf{1}_{n}^{\top} \mathbf{z}^{\ast} = 0$. %
\end{proof}

\section{Solving the Graph Laplacian Eigenvalue Problem}\label{sec:Implement}

We first consider a general case for dealing with a large and sparse eigenvalue problem. %
Suppose we are interested in some eigenvalues that closed to a target $\sigma$ of an large and sparse eigenvalue problem %
\begin{equation}\label{eqn:Ax_lx}
    A\mathbf{x} = \lambda \mathbf{x},\quad A \in \mathbb{R}^{n \times n}.
\end{equation}
A traditional method to solve such a problem is the Shift-Invert Arnoldi (SIA) method. %
SIA is a projection method that applies the Arnoldi method to the operator $(A - \sigma I)^{-1}$ %
for computing some eigenvalues nearest to $\sigma$ and the associated eigenvectors. %
In general, the project subspace is the so-called order-$k$ $(k \ll n)$ Krylov subspace generated by $B_{\sigma} : =(A-\sigma I)^{-1}$ %
and a unit vector $\mathbf{u}_1$:
\begin{equation}\label{eqn:krylov}
    \mathcal{K}_{k}(B_{\sigma}, \mathbf{u}_1) := %
    \{ \mathbf{u}_1, B_{\sigma}\mathbf{u}_1, B_{\sigma}^{2}\mathbf{u}_1, \ldots, B_{\sigma}^{k-1}\mathbf{u}_1 \}. %
\end{equation}
Once we have an $n \times k$ matrix $U_{k}:=[\mathbf{u}_{1},\mathbf{u}_{2},\ldots,\mathbf{u}_{k}]$ whose columns form an orthonormal basis of \eqref{eqn:krylov}. %
SIA will solve the eigenpair of $H_{k} = U_{k}^{\top}AU_{k}$, say $(\theta,\mathbf{s})$, that we are interested, %
and then recover the Ritz pair $(\theta,U_k\mathbf{s})$ of $B_{\sigma}$ to $(\sigma + 1/\theta, U_k\mathbf{s})$ as an approximate eigenpair of $A$. %
After that, if the accuracy is not good enough, the Krylov subspace \eqref{eqn:krylov} will be expanded. %
To this end, one has to find the solution $\mathbf{z}$ of the linear system
\begin{equation}\label{eqn:siarlinsys}
    (A - \sigma I)\mathbf{z} = \mathbf{u}_{k}
\end{equation}
and then orthogonalize the vector $\mathbf{z}$ against $U_{k}$ to generate the next basis vector $\mathbf{u}_{k+1}$ of $\mathcal{K}_{k+1}\big(B_{\sigma},\mathbf{u}_1\big)$. %

Direct methods, such as the LU or the Cholesky factorizations, are used to solve linear system \eqref{eqn:siarlinsys} exactly %
for the construction of the Arnoldi or Lanczos decomposition. %
{\it However, for a large matrix $A$, such a factorization is not feasible in general, and only iterative solvers are viable}. %
This difficulty motivates us to introduce the Shift-Invert Residual Arnoldi (SIRA) \cite{L07,LS07} method for the use of %
{\it inexactly} solving the inner linear systems.

\subsection{Review of the SIRA Method}

SIRA (Algorithm~\ref{alg:SIRA}) is an alternative applied to the matrix $B_{\sigma} := (A - \sigma I)^{-1}$ %
for computing a few eigenvalues of \eqref{eqn:Ax_lx} that is closed to $\sigma$. %

\begin{algorithm}[h]\caption{SIRA method with the target $\sigma$}\label{alg:SIRA}
\begin{algorithmic}[1]
    \REQUIRE Given a square matrix $A$ with size $n$; the target $\sigma$ of interested location; %
             an $n\times k$ matrix $U$ $(1\leq k \ll n)$ whose columns form an orthonormal basis of the order-$k$ Krylov subspace \eqref{eqn:krylov}; %
             and the tolerance $\varepsilon$.
    \ENSURE  The eigenpair $(\lambda, \mathbf{v})$ of $A$. %
    \REPEAT
        \STATE {\color{OliveGreen}{{\bf \% Subspace Extraction}}}
        \STATE Compute the Rayleight quotient $H = U^\top A U$.
        \STATE Let $(\theta,\mathbf{s})$ be an eigenpair of $H$, where $\theta \approx \lambda$.
        \STATE Compute the residual $\mathbf{r} = A\mathbf{y} - \theta \mathbf{y}$, where $\mathbf{y} = U\mathbf{s}$.
            \IF{$|\mathbf{r}| < \varepsilon$}
                \STATE {\bf return} $(\lambda,\mathbf{v})$, where $\lambda = \theta$ and $\mathbf{v} = \mathbf{y}$.
            \ELSE
                \STATE {\color{OliveGreen}{{\bf \% Subspace Expansion}}}
                \STATE Solve the linear system $(A-\sigma I)\mathbf{z} = \mathbf{r}$.
                \STATE Orthonormalize $\mathbf{z}$ against $U$ to obtain $\mathbf{u}$.
                \STATE Update $U = [U\ \mathbf{u}]$.
            \ENDIF
        \UNTIL{Capture the Ritz pair (approximate eigenpair) of $A$ with $\theta \approx \sigma$.}
\end{algorithmic}
\end{algorithm}

In the step of subspace extraction, SIRA takes the Ritz pair $(\theta,U_{k}\mathbf{s})$ as an approximate eigenpair of $A$ directly;
in the step of subspace expansion, SIRA solves the linear system
\begin{equation}\label{eqn:linsysRIRA}
    (A-\sigma I)\mathbf{z} = \mathbf{r}, \quad \mathbf{r} := (A - \theta I) U_{k}\mathbf{s}
\end{equation}
for the purpose of getting the next basis vector of $\mathcal{K}_{k+1}(B_{\sigma},\mathbf{u}_1)$. %
In summary, even through the projection subspace $\mathcal{K}_k(B_{\sigma},\mathbf{u}_1)$ of SIA and SIRA is the same for the identical unit vector $\mathbf{u}_1$, these two methods generally obtain different approximations \cite{JL14}. %

To use the SIRA method for finding a few eigenvalues nearest to $0$ and the associated eigenvectors of the GLEP~\eqref{eqn:GLEP}, %
we successively dig the desired pairs from a Krylov subspace and then expand this searching subspace if the results are not yet satisfactory. %
In conclusion, we have to solve a bunch of linear systems as in line~10 of the SIRA algorithm. %

Note that the right-hand side of the linear system in SIRA automatically satisfies the necessary condition of Lemma~\ref{lem:rortho1}. %
Owing to $\mathbf{r}$ in \eqref{eqn:linsysRIRA} is the residual vector and the $(\theta,U_{k}\mathbf{s})$ is a Ritz pair of $L$
with $\theta > 0$ and $\mathbf{1}_{n}^{\top}U_{k} = \mathbf{0}$, it implies that
\begin{equation*}
\mathbf{1}_{n}^{\top}\mathbf{r} = \mathbf{1}_{n}^{\top}(LU_{k}\mathbf{s} - \theta U_{k}\mathbf{s}) = 0. %
\end{equation*}
However, for large-scale applications, using direct methods to solve \eqref{eqn:linsysRIRA} in SIRA is still expensive in memory and time consuming.  So, in general, only iterative solvers are viable. %

It is worth mentioning that Lee \cite{L07} as well as Lee and Stewart \cite{LS07} made some analysis 
and indicated that the SIRA method may still work well in spite of the low or modest accuracy at each step for solutions of the linear systems \eqref{eqn:linsysRIRA}. %
This leads to the {\it inexact} SIRA method. Recently, Jia and Li \cite{JL14} proved that the inexact SIRA mimics the exact SIRA well %
when the relative error of the approximate solution of \eqref{eqn:linsysRIRA} is modestly small at each iteration.

Therefore, to solve the GLEP \eqref{eqn:GLEP}, we integrate an inner-outer iterative method, built-up by SIRA \cite{L07} as the iterative outer eigensolver %
and {\it inexact} solving inner linear systems \eqref{eqn:sinsysm} as well as \eqref{eqn:sdlinsys}.

\subsection{The Linear Systems in SIRA}\label{sec:cmg}

To our problem, we need to solve the singular linear system \eqref{eqn:sinsysm} as well as \eqref{eqn:sdlinsys}. %
In Section~\ref{sec:trimming}, we propose a trimming technique to remedy the singularity of $L$, %
and to solve an $(n-1) \times (n-1)$ linear system \eqref{eqn:tlinsys} instead. %
Then the resulting vector can be converted into a solution of \eqref{eqn:sinsysm} that is orthogonal to the kernel vector $\mathbf{1}_{n}$. %
If we attempt to find more than one eigenpair, the deflation method introduced in Section~\ref{sec:deflting} can be used to exclude the influence of convergent eigenpairs. %
In this case, we in fact face a deflated eigenvalue problem \eqref{eqn:DGLEP} and need to solve the linear system \eqref{eqn:dlinsys_i}. %

\begin{remark}\label{rmk:low_rank}
    When $n$ is very large, we can omit the low-rank term, $\tfrac{\sigma}{n}\widehat{\mathbf{1}}_{n}\widehat{\mathbf{1}}_{n}^\top$, %
    appearing in \eqref{eqn:dlinsys_i} and \eqref{eqn:trim_sigma}. At this moment, we turn to solve %
    \begin{equation*}
        \widehat{L}_{\sigma} \widehat{\mathbf{z}} = \widehat{\mathbf{r}} \quad \text{and}\quad %
        \left(\widehat{L}_{\sigma} + \delta\widehat{V}\widehat{V}^{\top}\right) \widehat{\mathbf{z}} = \widehat{\mathbf{r}}
    \end{equation*}
    instead of \eqref{eqn:trim_sigma} and \eqref{eqn:dlinsys_i}, respectively, where $\widehat{L}_{\sigma} := \widehat{L}  - \sigma\widehat{I}_{n}$. %
\end{remark}

Choosing suitable preconditioners plays an important role to get a decent performance for solving a linear system with iterative methods. %
Based on the the idea of Vaidya \cite{V91} using graph theory for iterative methods, %
Speilman and Teng \cite{ST04} gave a combinatorial preconditioner by graph sparsification and proposed the first near-linear time Symmetric Diagonal Dominate\footnote{A square matrix $A$ is said to be symmetric diagonal dominate if $A^\top = A$ and $A_{jj} \geq \sum_{j\neq k}\left\vert A_{jk}\right\vert$.} (SDD) solver. %
Koutis {\it et al.} \cite{KMT11,KMP11} proposed the construction of Combinatorial MultiGrid, referred to as CMG, preconditioning chain and %
applied the CMG preconditioner to deal with optimization problems in computer vision. %
The algorithm CMG reduces solving general SDD systems to solve systems in graph Laplacian matrices. %
Given a graph, they construct by adding carefully chosen sets of edges to obtain a low-stretch spanning tree and to get a sequence of %
logarithmically many successively sparser graphs that approximate it as a preconditioner of iterative methods, %
such as the preconditioned conjugate gradient method or the minimal residual method. %
For a more in-depth discussion of this work, we refer to \cite{KOSZ13,ST14}.


The matrix $\widehat{L}$ in \eqref{eqn:Lhati} is also indeed a SDD matrix so that we take the corresponding CMG matrix %
$\widehat{M}$ as a preconditioner in \eqref{eqn:tlinsys}. %
However, $\widehat{M}$ is not a suitable preconditioner if $\sigma$ is nonzero and/or $V$ is nonempty because our goal is to solve the linear systems \eqref{eqn:dlinsys_i} and \eqref{eqn:trim_sigma}. %
In such cases, as the suggestion in \cite{HHLW14}, their preconditioners should involve the deflation terms. %
Moreover, according to Remark~\ref{rmk:low_rank}, we discard the rank-one correction $\tfrac{\sigma}{n}\widehat{\mathbf{1}}_{n}\widehat{\mathbf{1}}_{n}^\top$ %
in both equations if $n$ is very large. As a consequence, for large $n$, we take $\widehat{M}_{\sigma}$ and $\widehat{M}_{\sigma} + \delta\widehat{V}\widehat{V}^{\top}$ %
as preconditioners for \eqref{eqn:trim_sigma} and \eqref{eqn:dlinsys_i}, respectively, where $\widehat{M}_{\sigma} \approx \widehat{L} - \sigma \widehat{I}_n$. %
Note that $\sigma$ should be chosen carefully to preserve $\widehat{L} - \sigma \widehat{I}_n$ to be a SDD matrix. %

Suppose that we obtain the first $c$ (smallest) eigenpairs of $L$. Let $\Lambda\in\mathbb{R}^{c\times c}$ be the eigenvalue matrix and %
$V\in \mathbb{R}^{n\times c}$ be the corresponding eigenvector matrix whose columns are orthonormal to each other. %
By means of the Sherman-Morrison-Woodbury formula \cite{GvL12}, %
the inverse of $\widehat{M}_{\sigma} + \delta\widehat{V}\widehat{V}^{\top}$ with $\widehat{V}\in\mathbb{R}^{(n-1)\times c}$ is given by %
\begin{equation}\label{eqn:SMWF}
    \left(\widehat{M}_{\sigma} + \delta\widehat{V}\widehat{V}^{\top}\right)^{-1}
  = \left(\widehat{I}_n - \delta\widehat{M}_{\sigma}^{-1}\widehat{V}%
   (I_{c} + \delta\widehat{V}^{\top}\widehat{M}_{\sigma}^{-1}\widehat{V})^{-1}\widehat{V}^{\top}\right)\widehat{M}_{\sigma}^{-1},
\end{equation}
where $\widehat{I}_n = I_{n-1}$. %


\begin{remark}[{\bf Strategy for Preconditioners}]\label{cor:large_n_systems}
  When $n$ is large, with the previous notations, the choice of preconditioners to solve the linear systems %
  in {\normalfont SIRA} by iterative methods can be summarized as follows.
  \begin{itemize}
    \item[(i)] To the case \eqref{eqn:tlinsys}, we choose $\widehat{M}  \approx \widehat{L}$ as a preconditioner.
    \item[(ii)] To the case \eqref{eqn:trim_sigma}, we turn to consider the linear system %
                $\left(\widehat{L}  - \sigma\widehat{I}_{n}\right)\widehat{\mathbf{z}} = \widehat{\mathbf{r}} $ and choose $\widehat{M}_{\sigma} \approx \widehat{L}  - \sigma \widehat{I}_{n}$ as a preconditioner.
    \item[(iii)] To the case \eqref{eqn:dlinsys_i}, we consider the linear system %
                    $\left(\left(\widehat{L}  - \sigma \widehat{I}_n\right) + \delta \widehat{V} \widehat{V} ^{\top}\right) %
                    \widehat{\mathbf{z}} = \widehat{\mathbf{r}} $ %
                and choose $\widehat{M}_{\sigma} + \delta \widehat{V} \widehat{V} ^{\top}$ as a preconditioner %
                with the aid of the formula \eqref{eqn:SMWF}.
  \end{itemize}
\end{remark}

\subsection{Integrated SIRA with Trimming and Deflating Techniques for GLEPs}

Algorithm~\ref{alg:SIRAtd}, called iSIRA, summarizes the SIRA method combined with techniques of trimming (Section~\ref{sec:trimming}) and deflation (Section~\ref{sec:deflting}) for finding some smallest positive eigenvalues and associated eigenvectors of the GLEP \eqref{eqn:GLEP}. %
The techniques of trimming, deflation and shift-invert enhance can be described as in the Figure~\ref{fig:tds}.

\begin{figure}
  \centering
  \includegraphics[width=1\textwidth]{./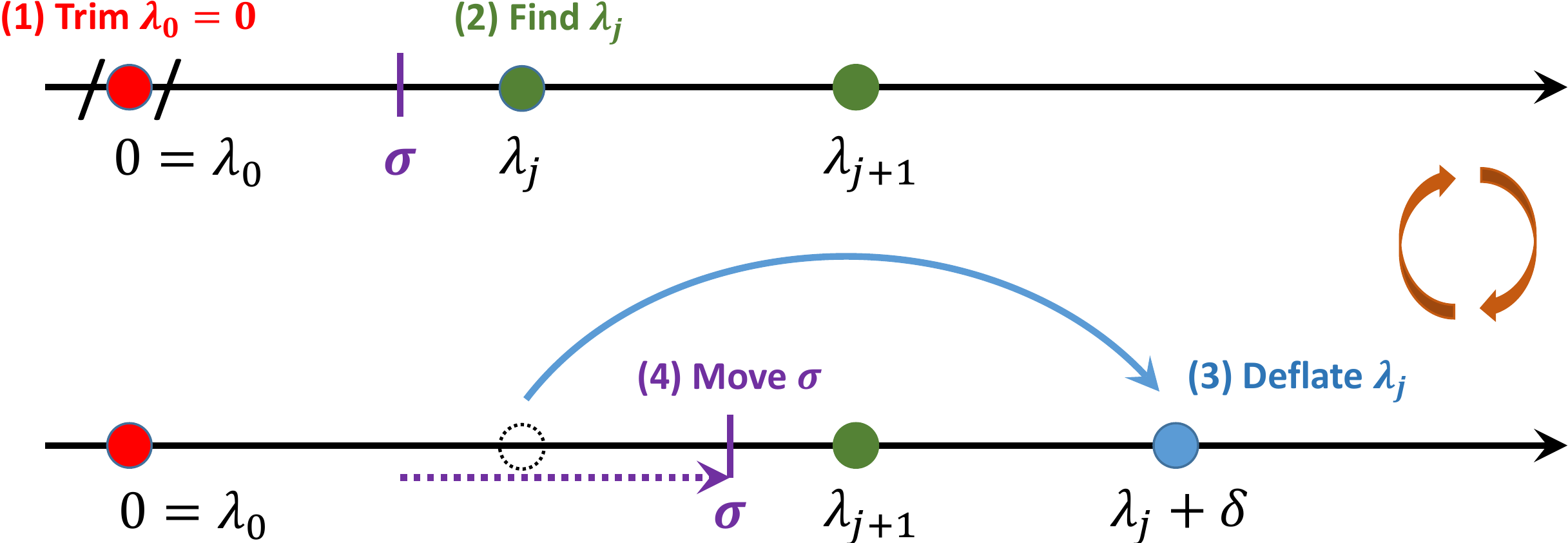}
  \caption{(1) Trim $\lambda_0 = 0$; (2) Find $\lambda_j\ (j>0)$; (3) Deflate $\lambda_j$ to $\lambda_j + \delta$; (4) Move $\sigma$.}\label{fig:tds}
\end{figure}
\begin{algorithm}[!htb]\caption{iSIRA: integrated SIRA method for GLEP \eqref{eqn:GLEP}}\label{alg:SIRAtd}
\begin{algorithmic}[1]
    \REQUIRE An $n \times n$ graph Laplacian matrix $L$; a number of desired value $d$; an $n \times k_{0}$ column orthonormal matrix $U_{k_{0}}$, %
             $1 \leq k_{0} < m$, that each column of $U_{k_{0}}$ is also orthogonal to $\mathbf{1}_{n}$; trimmed index $1 \leq i \leq n$; %
             a deflating scalar $\delta > 0$; target shift $\sigma(=0)$; tolerances $\varepsilon$; %
             maximum dimension of the search space $m$ and the restart size of the subspace $q$. %
             Note that $1\leq d \leq q < m \ll n$.
    \ENSURE The first $d$ smallest positive pairs $(\lambda_j,\mathbf{v}_j)$ of $L$, $j = 1,\ldots, d$. %
    \STATE Set $D = [\ ]$, $V = [\ ]$ (empty arrays), $k = k_{0}$ and $\text{flag} = 0$.
    \FOR{$j = 1, \ldots , d$}
        \WHILE{$\text{flag}=0$}
            \STATE Compute $W_{k} = (L + \delta VV^{\top})U_{k}$ and $H_{k} = U_{k}^{\top}W_{k}$.
            \STATE {\color{OliveGreen}{{\bf \% Subspace Extraction}}}
            \STATE Compute all eigenpairs of $H_{k}$, say $\{(\theta_{1},\mathbf{s}_{1}),\ldots,(\theta_{k},\mathbf{s}_{k})\}$, where %
                   $\theta_{t}$ is in ascending order and $\|\mathbf{s}_{t}\|_{2} = 1$, $t = 1, \ldots, k$. Denote $S_{k} = [\mathbf{s}_{1}\ \cdots\ \mathbf{s}_{k}]$.
            \STATE Compute the residual $\mathbf{r} = W_k\mathbf{s}_{1} - \theta_{1}U_k\mathbf{s}_{1}$.
            \IF{$\|\mathbf{r}\|_2 < \varepsilon$}
                \STATE Update $D = [D\ \ \lambda_{j}]$ and $V = [V\ \ \mathbf{v}_{j}]$, where $\lambda_{j} = \theta_{1}$ and $\mathbf{v}_{j} = U_k\mathbf{s}_{1}$. %
                \STATE Set $\text{flag} = 1$.
            \ELSE
                \IF{$k = m$}
                    \STATE {\color{OliveGreen}{{\bf \% Subspace Restart}}}\newline
                        $U_{k} = U_{m}S_{k}(:,1\text{\normalfont{:}}q)$, $W_{k} = W_{m}S_{k}(:,1\text{\normalfont{:}}q)$, %
                        $H_{k} = \text{diag}\{\theta_{1},\ldots,\theta_{q}\}$.
                    \STATE Set $k = q$.
                \ENDIF
                \STATE {\color{OliveGreen}{{\bf \% Subspace Expansion}}}
                \STATE Find a solution $\mathbf{z}^{\ast}$ of \eqref{eqn:sinsysm} if $j = 1$; of \eqref{eqn:sdlinsys} if $j > 1$. \newline
                {\color{OliveGreen}{{\bf \% Singularity Trimming and Eigenvalue Deflating}}}\newline
                    (i)\ \ Solve the linear system \eqref{eqn:tlinsys} if $j = 1$; the system \eqref{eqn:dlinsys_i} if $j > 1$.\newline
                    (ii)\ Compute the $n$-vector $\mathbf{z}^{\ast}$ \eqref{eqn:z_i}.
                \STATE Orthonormalize $\mathbf{z}^{\ast}$ against $U_{k}$ (and $V$ if $j > 1$) to obtain $\mathbf{u}$.
                \STATE Expand $U_{k}$ by $U_{k+1} = \big[U_{k}\ \ \mathbf{u}\big]$. Set $k = k + 1$.
            \ENDIF
        \ENDWHILE
        \STATE {\color{OliveGreen}{{\bf \% Eigenvector Purging}}} \newline
            $U_{k} = U_{k}S_{k}(:,2\text{\normalfont{:}}k)$, $W_{k} = W_{k}S_{k}(:,2\text{\normalfont{:}}k)$, $H_{k} = \text{diag}\{\theta_{2},\ldots,\theta_{k}\}$.
        \STATE Move $\sigma \approx \theta_{2}$. Set $k = k - 1$ and $\text{flag} = 0$. %
    \ENDFOR
\end{algorithmic}
\end{algorithm}

For a given $k \geq 1$, as in line~6 of the iSIRA algorithm, let $U_{k}$ be the current searching subspace and %
$\{(\theta_1,\mathbf{s}_1), \cdots, (\theta_k,\mathbf{s}_k)\}$ be the set of eigenpairs of $H_k$ %
with the ascending order of eigenvalues and the unit norm of each eigenvector. %
The $k \times k$ column orthonormal matrix $S_{k} = \begin{bmatrix}\mathbf{s}_{1} & \cdots & \mathbf{s}_{k}\end{bmatrix}$ %
is the collection of the eigenvectors of $H_{k}$. %

We first explain the step -- {\it subspace restart} -- in line~12 to 15 of the iSIRA algorithm. %
Due to the storage requirements and computational costs, the order of the searching subspace $k$ can not be too large and shall be limited. %
That is to say, we have to shrink down the subspace in case the dimension is equal to the limit size $m$. This process is called {\it restart}. %
So, once we generate an $m$-dimension search space $U_{m}$, i.e. $k = m$, %
we will reduce $U_{m}$ to a $q$-dimensional matrix, $1 \leq q < m$, that preserves some useful information. %
To maintain the first $q$ eigeninformation, the matrices $U_m$, $W_m$ and $H_m$ in the iSIRA will be updated by %
\begin{equation*}
  U_{k} \leftarrow U_{m}S_{m}(:,1\text{\normalfont{:}}q),\ W_{k} \leftarrow W_{m}S_{m}(:,1\text{\normalfont{:}}q),\ H_{k} \leftarrow \text{\normalfont{diag}}\{\theta_{1},\ldots,\theta_{q}\},\
  k \leftarrow q.
\end{equation*}

We then discuss the step -- {\it eigenvector purging} -- in line~22 of the iSIRA algorithm. %
Suppose we have a convergent eigenpair $(\lambda_{j}, \mathbf{v}_{j})$ for some $1 \leq j \leq d$, %
where $\lambda_{j} = \theta_{1}$ and $\mathbf{v}_{j} = U_{k}\mathbf{s}_{1}$. %
The deflating action will throw $\lambda_{j}$ forward to $\lambda_{j} + \delta$, %
and the operation matrix will, for the subsequent loops, contain the deflating term $L + \delta \mathbf{v}_{j}\mathbf{v}_{j}^{\top}$. %
See Section~\ref{sec:deflting} and line~17 of the iSIRA algorithm. %
Furthermore, we can also in fact purge the direction of $\mathbf{v}_{j} = U_{k}\mathbf{s}_{1}$. %
Consequently, owing to $U_{k}^{\top}U_{k} = I_{k} = S_{k}^{\top}S_{k}$, we will update $U_{k}$, $W_{k}$ and $H_{k}$ as follows %
\begin{equation*}
  U_{k} \leftarrow U_{k}S_{k}(:,2\text{\normalfont{:}}k),\ W_{k} \leftarrow W_{k}S_{k}(:,2\text{\normalfont{:}}k),\ H_{k} \leftarrow \text{\normalfont{diag}}\{\theta_{2},\ldots,\theta_{k}\},\
  k \leftarrow k-1,
\end{equation*}
which are the modifications in line~22 of the iSIRA algorithm. Note that, in this case, the dimension of the searching subspace is {\it shrunk} to $k-1$. %

\section{Numerical Experiments}\label{sec:Numerical}

In this section, we demonstrate the efficiency and accuracy of the iSIRA algorithm (Algorithm~\ref{alg:SIRAtd}), %
to solve the GLEP \eqref{eqn:GLEP} for computing some smallest positive eigenvalues and associated eigenvectors. %
We use {\tt isira}, Integrated SIRA, to denote Algorithm~\ref{alg:SIRAtd}.

All computations in this section are carried out in MATLAB~2017a. %
For the hardware configuration, we use a DELL XPS~15~9560 laptop with an Intel~i7-7700HQ Quad Core, 16GB~RAM, and the Windows~10 operating system.
The data sets to our numerical experiments are provided within KONECT \cite{K13} (Koblenz Network Collection, \url{http://konect.uni-koblenz.de}) %
and we consider the undirected, unweighted networks. %

Moreover, to demonstrate the performance of our algorithm, we compare {\tt isira} with two methods. %
We call the MATLAB built-in function {\tt eigs} to compute the eigenvalues with smallest magnitudes %
using the diagonal perturbation $\tau I_{n}$, where $\tau > 0$ is a small perturbation such as $10^{-8}$ and $n$ is the amount of vertex. %
Note that we have to compute $11$ eigenvalues of $L_{\tau} := L + \tau I_{n}$ since the smallest one is equal to $\tau$. %

Beside, we also apply the idea of null-space deflation in \cite{HGLY14}, proposed by the part authors (Huang, Lin and Yau) of this paper, %
to the graph Laplacian matrix. In short, using a rank-two correction, the matrix $L$ is transformed to %
$L_{d} + \big[\mathbf{e}_{1} \ \mathbf{1}_{n}\big]\Big[{-\mathbf{e}_{1}^{\top} \atop \mathbf{1}_{n}^{\top}}\Big]$, %
where $L_{d} := L+\mathbf{e}_{1}\mathbf{e}_{1}^{\top}$.
Note that this method is based on the {\tt eigs} algorithm with the deflation of the zero eigenvalue and we call it as {\tt ndeigs} (Null-space Deflating {\tt eigs}). %
That is, we calculate some smallest positive eigenvalues using {\tt eigs} with a function handle specified %
how to solve the linear systems whose the coefficient matrix $L_{d}$ together with a rank-two correction.

As a consequence, these two methods need to compute the matrix factorizations of $L_{\tau}$ and $L_{d}$ in advance %
for solving the linear systems involved in {\tt eigs}. %
Moreover, the convergence tolerance of each eigensolver is set to be $10^{-8}$. %

\begin{remark}\label{rmk:trimID}
    We give some remarks on the trimming index and the deflating coefficient %
    indicated in Section~\ref{sec:trimming} and Section~\ref{sec:deflting}, respectively.
    \begin{itemize}
      \item[(i)] The choice of trimming index may be a hyperparameter. In practice, we select the one whose diagonal entry has the maximum degree so that there are more rows (columns) will has strictly diagonally dominant property\footnote{This means that the magnitude of diagonal entry is strictly greater than the sum of the absolute value of the rest elements in the same row.}. In Example~\ref{exp:trimID}, we will present some comparison results. %
      \item[(ii)] We choose $\delta = 2 \cdot \max\{ \text{\normalfont diag}(L) \}$ to be the deflating coefficient in Theorem~\ref{thm:deflating} based on the {\normalfont Gershgorin} Theorem \cite{GvL12}. %
          This can ensure that deflating eigenvalues will not fall inside the spectrum of $L$ and still keep the order of magnitude. %
    \end{itemize}
\end{remark}

\begin{example}\label{ex:konect}
    We compute the first ten smallest eigenvalues of the networks from {\normalfont KONECT} with vertex size larger than 100,000. %
    The first five data in Table~\ref{tab:konect_sira} are connected networks and the rest of networks have more than one connected component. %
    For the latter, we first identify which component of each vertex in the graph belongs to and then take out %
    the subnetwork that has the most members. %
\end{example}
\begin{table}[!ht]
    \centering
    \begin{tabular}{c|c|c|D{.}{.}{2}D{.}{.}{2}D{.}{.}{2}}
    \toprule
    \multirow{2}{*}{Code} & \multirow{2}{*}{$n$} & \multirow{2}{*}{$nnz$} & \multicolumn{3}{c}{Time (sec)} \\ \cline{4-6}
                          & & & \multicolumn{1}{c}{\tt isira} & \multicolumn{1}{c}{\tt ndeigs} & \multicolumn{1}{c}{\tt eigs}\\
    \hline
    \hline
    \rowcolor[gray]{.9}
    {\bf LM}  & \multicolumn{1}{r|}{  104,103} & \multicolumn{1}{r|}{4,490,269}  &  72.3 & 153.4 & 1856.0                                      \\
    \rowcolor[gray]{1}
    {\bf GW}  & \multicolumn{1}{r|}{  196,591} & \multicolumn{1}{r|}{2,097,245}  &  51.6 &  61.3 & 224.7                                       \\
    \rowcolor[gray]{.9}
    {\bf CA}  & \multicolumn{1}{r|}{  334,863} & \multicolumn{1}{r|}{ 2,186,607} &  66.8 &  67.6 & 203.3                                       \\
    \rowcolor[gray]{1}
    {\bf CY}  & \multicolumn{1}{r|}{1,134,890} & \multicolumn{1}{r|}{ 7,110,138} & 315.3 & \multicolumn{1}{c}{---} & \multicolumn{1}{c}{---}\\
    \rowcolor[gray]{.9}
    {\bf HY}  & \multicolumn{1}{r|}{1,402,673} & \multicolumn{1}{r|}{ 6,957,511} & 223.3 & \multicolumn{1}{c}{---} & \multicolumn{1}{c}{---}\\
    \rowcolor[gray]{1}
    {\bf FX}  & \multicolumn{1}{r|}{2,523,386} & \multicolumn{1}{r|}{18,360,988} & 740.4 & \multicolumn{1}{c}{---}  & \multicolumn{1}{c}{---}\\
    \hline
    \hline
    \rowcolor[gray]{.9}
    {\bf FI}  &\multicolumn{1}{r|}{  105,722} & \multicolumn{1}{r|}{ 4,739,058}  &   54.8 & 61.6                    & 404.7                   \\
    \rowcolor[gray]{1}
    {\bf SK}  &\multicolumn{1}{r|}{1,694,616} & \multicolumn{1}{r|}{23,883,034}  &  660.6 & 10310.5                 & \multicolumn{1}{c}{---} \\
    \rowcolor[gray]{.9}
    {\bf YT}  &\multicolumn{1}{r|}{3,216,075} & \multicolumn{1}{r|}{21,955,823}  & 1068.0 & \multicolumn{1}{c}{---} & \multicolumn{1}{c}{---} \\
    \bottomrule
    \end{tabular}
    \caption{The CPU time of the eigensolvers for networks. The symbol --- means that 16 GB memory was not enough for computing the required matrix factorizations. %
    The first column is the code name assigned in {\normalfont KONECT}. $n$ shows the vertex size of the (largest) connected network. %
    The column $nnz$ denotes the number of nonzero elements of its Laplacian matrix and the last column records the time costs of the three methods.}\label{tab:konect_sira}
\end{table}
\begin{figure}[!ht]
\centering
 \begin{tabular}{cc}
 \subfigure[Sparse Pattern of {\bf FX}]{\label{fig:fx_kmr}\includegraphics[width=0.46\textwidth]{./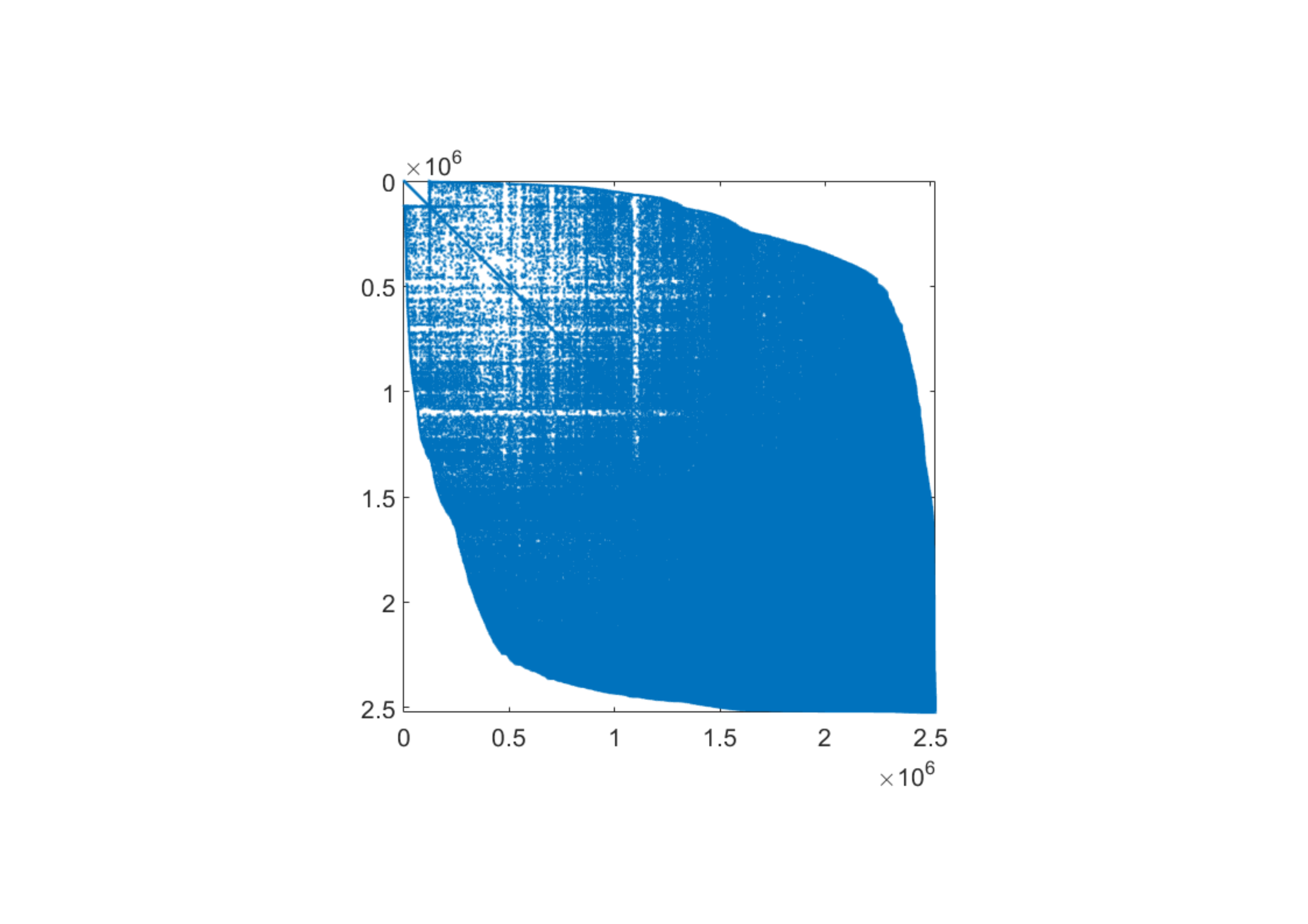}}
 \subfigure[Residual Evolution of {\bf FX}]{\label{fig:yt_tdsira}\includegraphics[width=0.48\textwidth]{./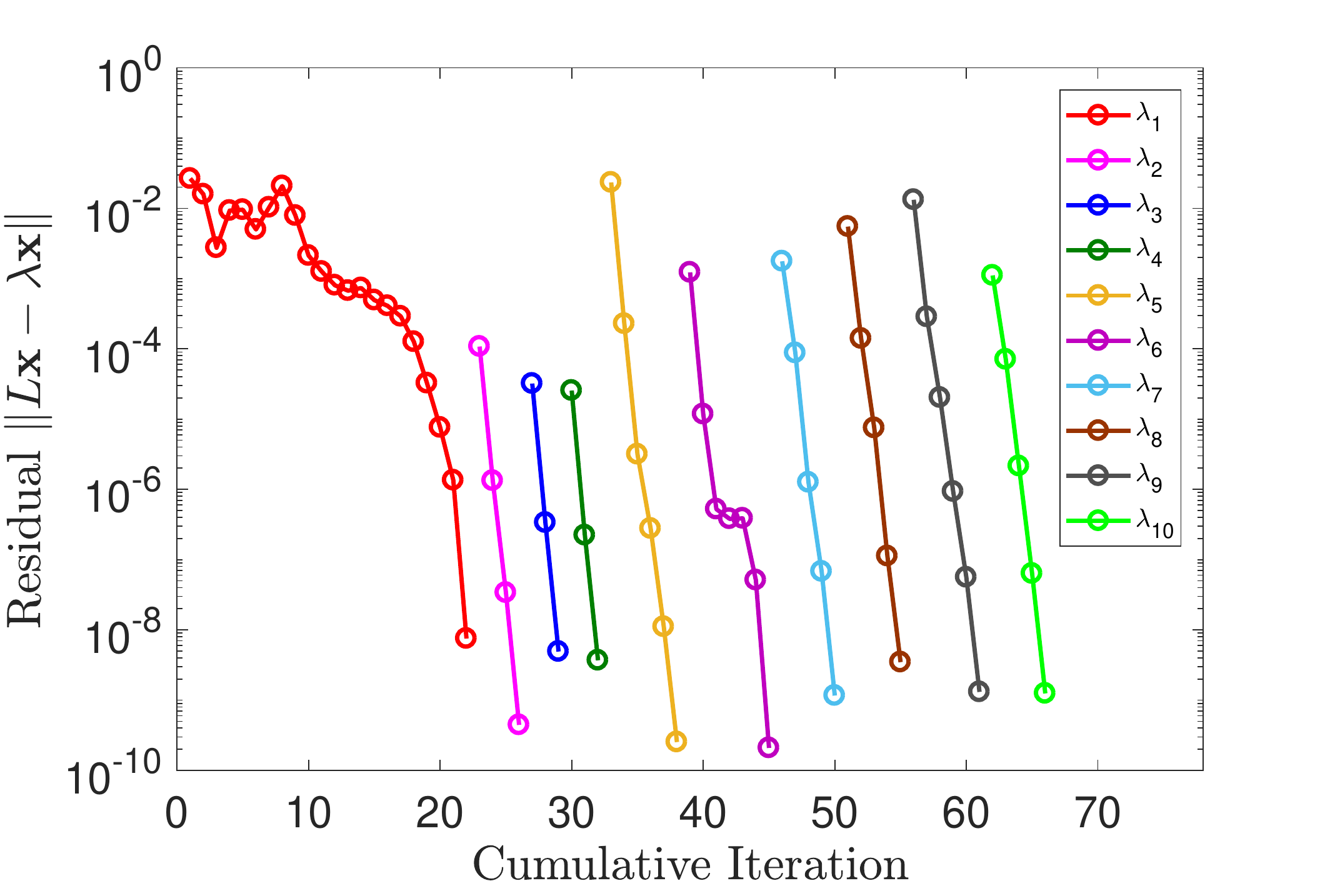}}\\
 \subfigure[Sparse Pattern of {\bf YT}]{\label{fig:fx_kmr}\includegraphics[width=0.46\textwidth]{./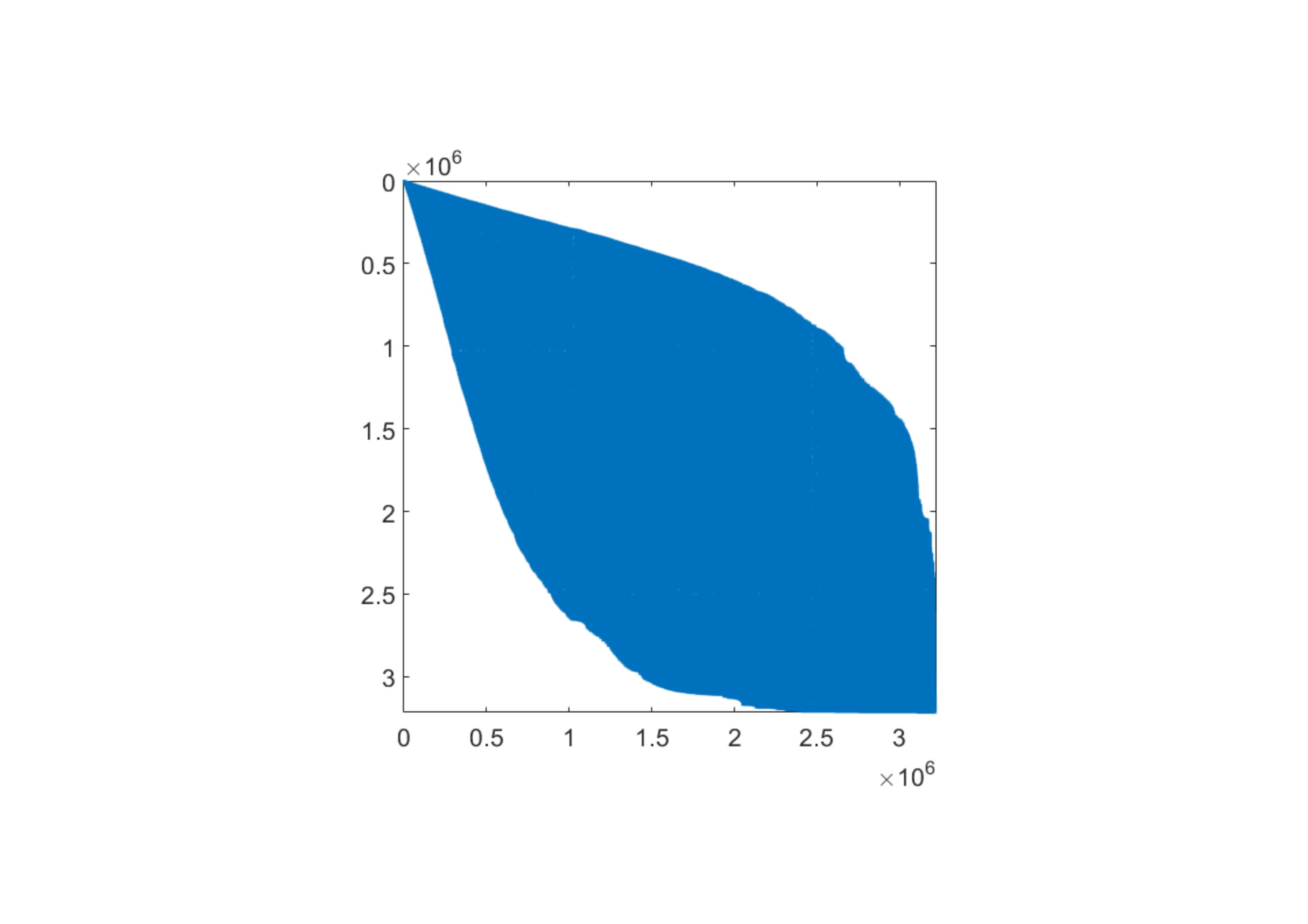}}
 \subfigure[Residual Evolution of {\bf YT}]{\label{fig:yt_tdsira}\includegraphics[width=0.48\textwidth]{./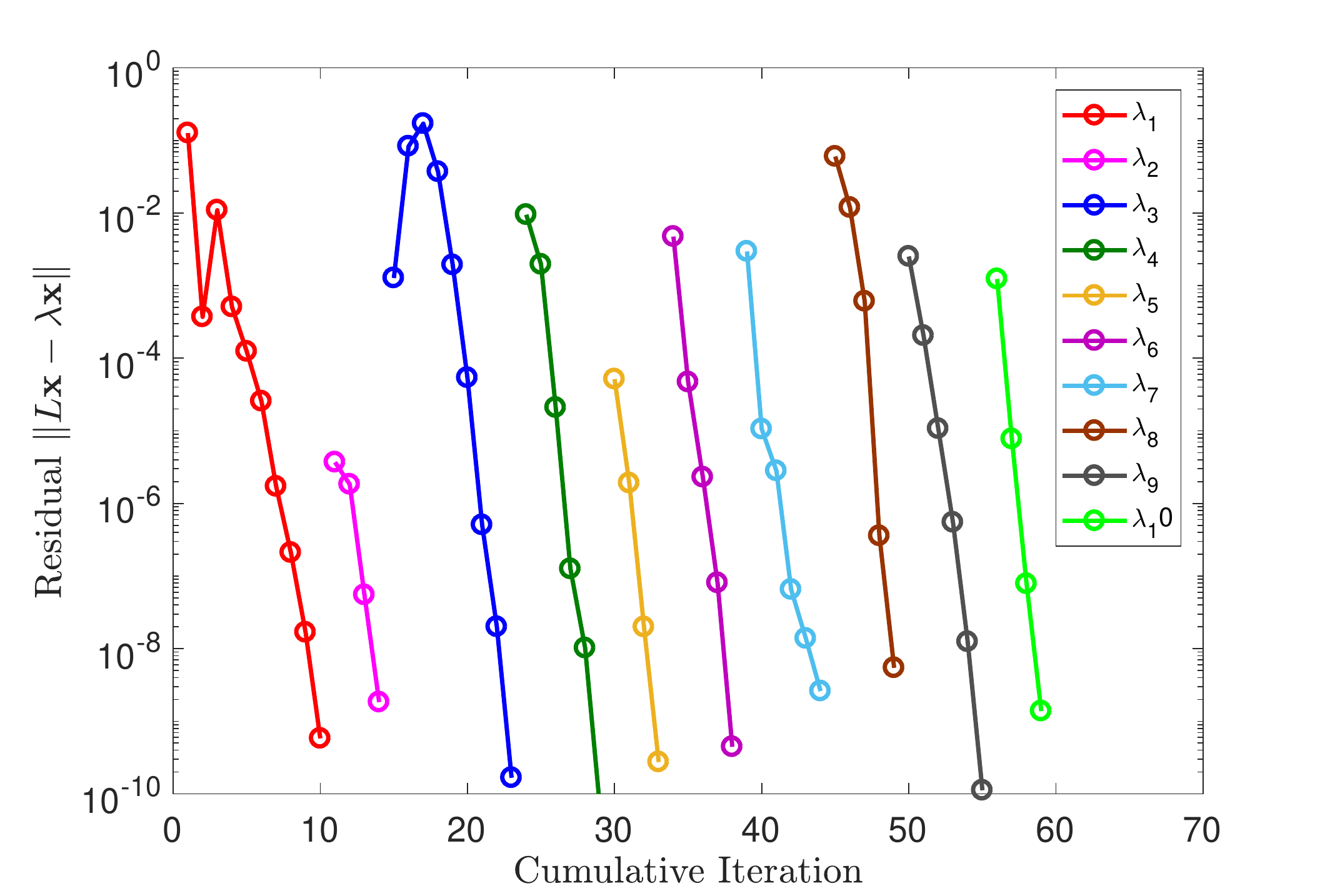}}\\
 \end{tabular}
 \caption{Sparsity patterns of the reverse Cuthill-McKee ordering and convergence histories of the {\tt isira} method for the networks {\bf FX} and {\bf YT}.}
     \label{fig:pattern_residual}
\end{figure}

For large networks whose bandwidth are very wide after performing the reordering %
such the minimum degree algorithm \cite{M57}, Cuthill-McKee algorithm \cite{CK69} and the reverse Cuthill-McKee algorithm\cite{G76}, %
to achieve the matrix factorizations will suffer from the problem on out of memory. %
Figure~\ref{fig:pattern_residual} shows matrix pattern of {\bf FX} and {\bf YT} after performing the reverse Cuthill-McKee algorithm. %
As we can see that the sparse pattern is still very \textquotedblleft fat\textquotedblright and this will make the factorization produce dense matrices. %
In such situations, the {\tt isira} method circumvent this difficulty %
using the {\it iterative method} and {\it inexactly solving} the linear systems contained the eigensolver. %

Table~\ref{tab:konect_sira} reveals feasibility of {\tt isira} to capture some smallest eigenvalues %
in a reasonable time cost. All networks have similar convergence processes and %
we present iteration behaviors of {\bf FX} and {\bf YT} in Figure~\ref{fig:pattern_residual}. %
Moreover, in Table~\ref{tab:konect_sira}, %
we also see that the techniques from our previous method {\tt ndeigs} \cite{HGLY14} can adjust the structure of a matrix without changing its sparsity of the matrix. %
This can reduce possibility of zero diagonal elements appearing in the execution of the matrix factorization. %
We will demonstrate this effect again in Remark~\ref{rmk:chol_band}.%

\begin{example}\label{exp:trimID}
In Remark~\ref{rmk:trimID}, we make some interpretable idea to select the vertex whose degree is the maximum as the trimming index. %
Table~\ref{tab:trim_index} shows some comparisons between the vertex degree and the computational time. %
\begin{table}[!ht]
\centering{\scriptsize
    \begin{tabular}{cc||cc||cc||cc||cc}
    \toprule
    \multicolumn{2}{c||}{\bf CY} & \multicolumn{2}{c||}{\bf HY} & \multicolumn{2}{c||}{\bf FX} & \multicolumn{2}{c||}{\bf YT} & \multicolumn{2}{c}{\bf SK}\\ \hline\hline
      degree &         time  & degree   &  time         & degree  &  time          & degree & time          & degree & time         \\ \hline
           1 &         406.6 &      1   &  319.2        &      1  &  784.5         &      1 & 1462.2        &      1 & 820.1        \\ \hline
          29 &         360.0 &    172   &  262.0        &     57  &  744.5         &     29 & 1379.3        &    360 & 747.6        \\ \hline
      28,754 &         315.3 & 31,883   &  223.3        &  1,474  &  740.4         & 91,751 & 1068.0        & 35,455 & 660.0        \\
    \bottomrule
    \end{tabular}}
\caption{The CPU time of the {\tt isira} method using different trimming indices.}\label{tab:trim_index}
\end{table}
\end{example}

In the first four examples, the third row is exact the index $n$, the fourth row presents the results by deleting the first row and column, and the last row is obtained by trimming the vertex which has the maximum degree. %
In the case of {\bf SK}, the minimum degree is still equal to one but is not equal to the last index. %
The last row also picks the index with the maximum degree. %
The value 360 in the network {\bf SK} is the degree of its first vertex. %
On our experimental experience, trimming the vertex with the smallest degree always get slower computational efficiency. %
This may indirectly display that trimming the index of the vertex with the maximum degree may be an appropriate choice.

\begin{figure}[!ht]
\centering
 \begin{tabular}{cc}
 \subfigure[{\bf WO}]{\label{fig:wo_kmr}\includegraphics[width=0.185\textwidth]{./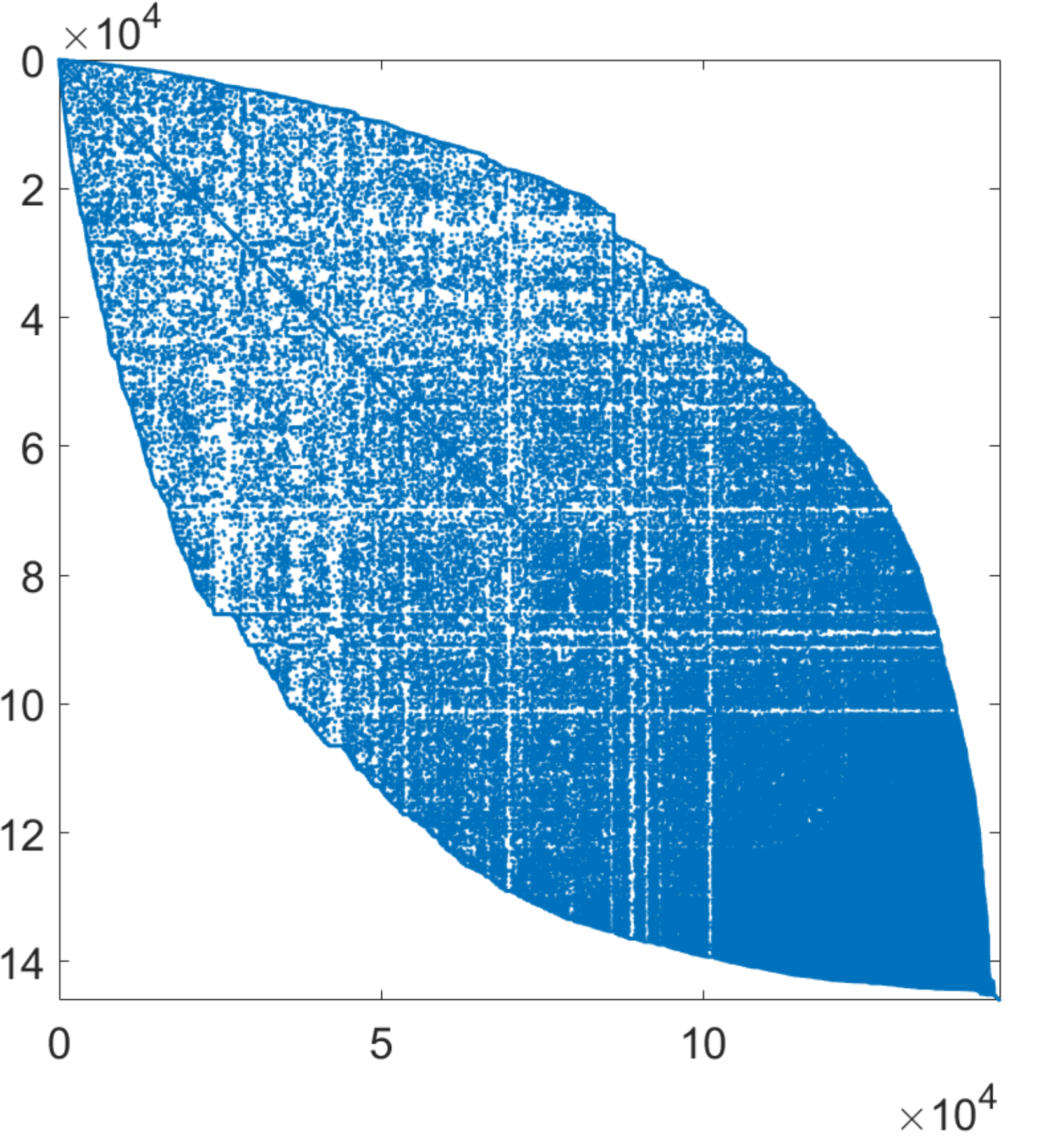}}
 \subfigure[{\bf CD}]{\label{fig:cd_kmr}\includegraphics[width=0.185\textwidth]{./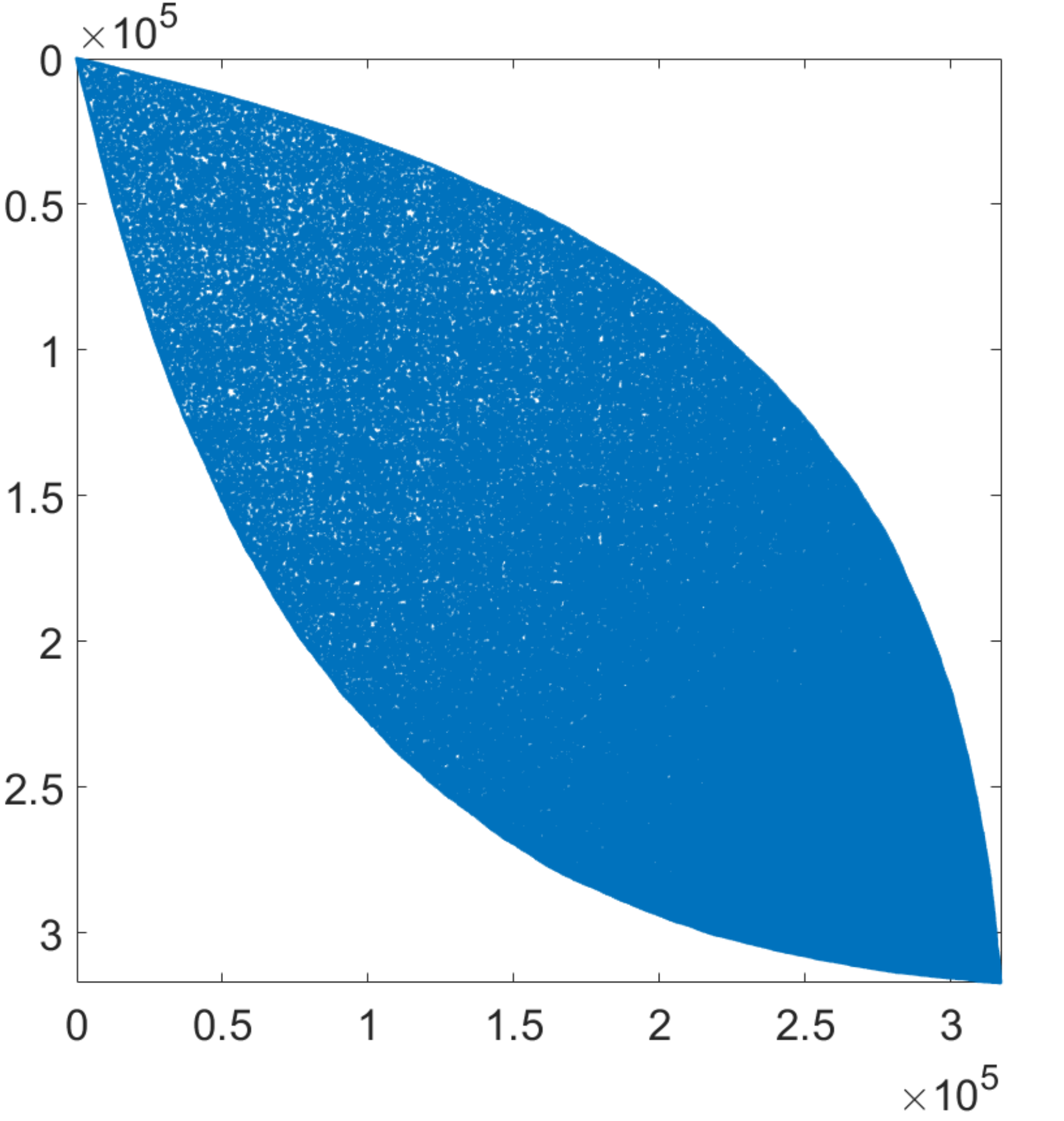}}
 \subfigure[{\bf RD}]{\label{fig:rd_kmr}\includegraphics[width=0.18\textwidth]{./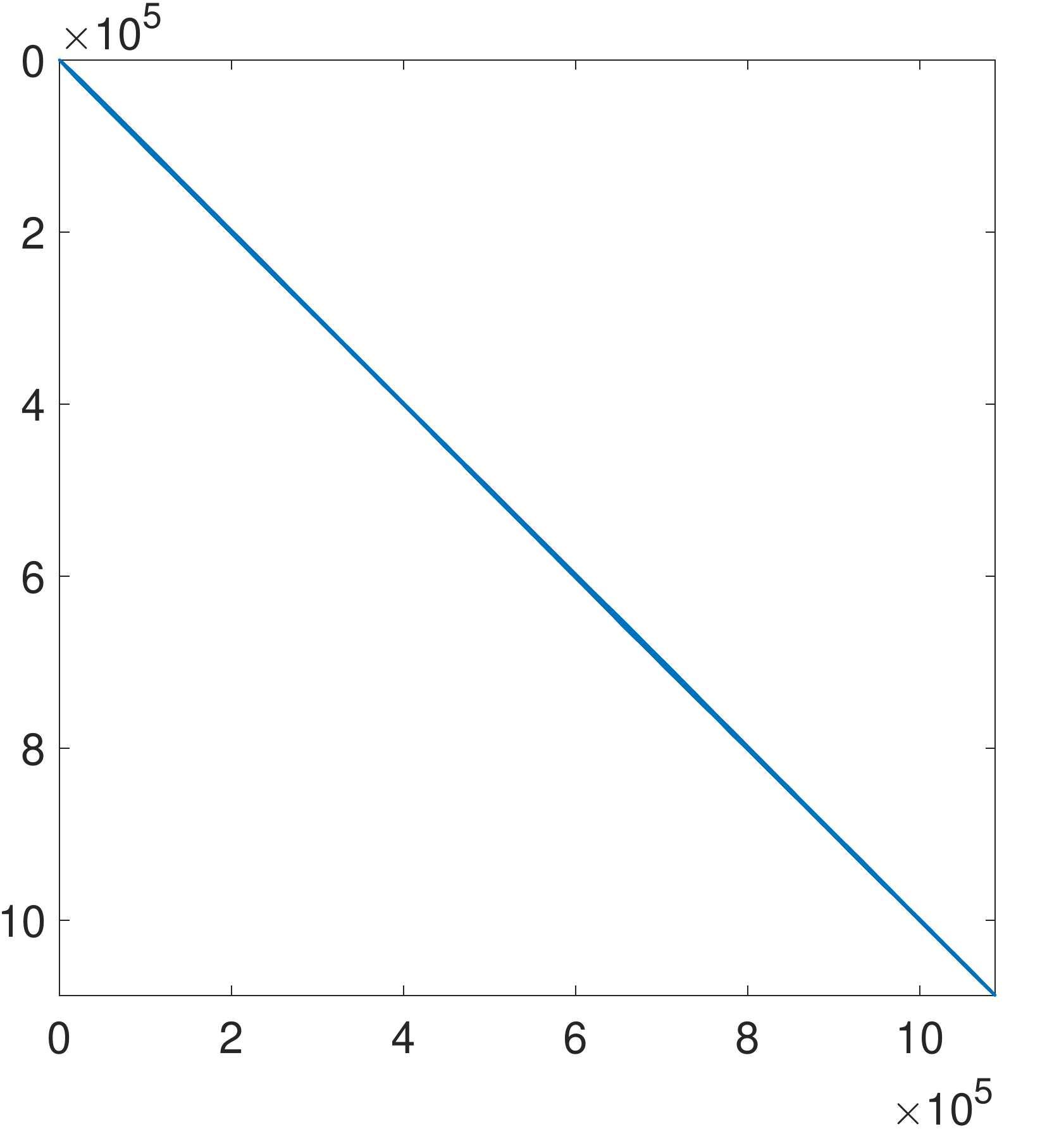}}
 \subfigure[{\bf R1}]{\label{fig:r1_kmr}\includegraphics[width=0.18\textwidth]{./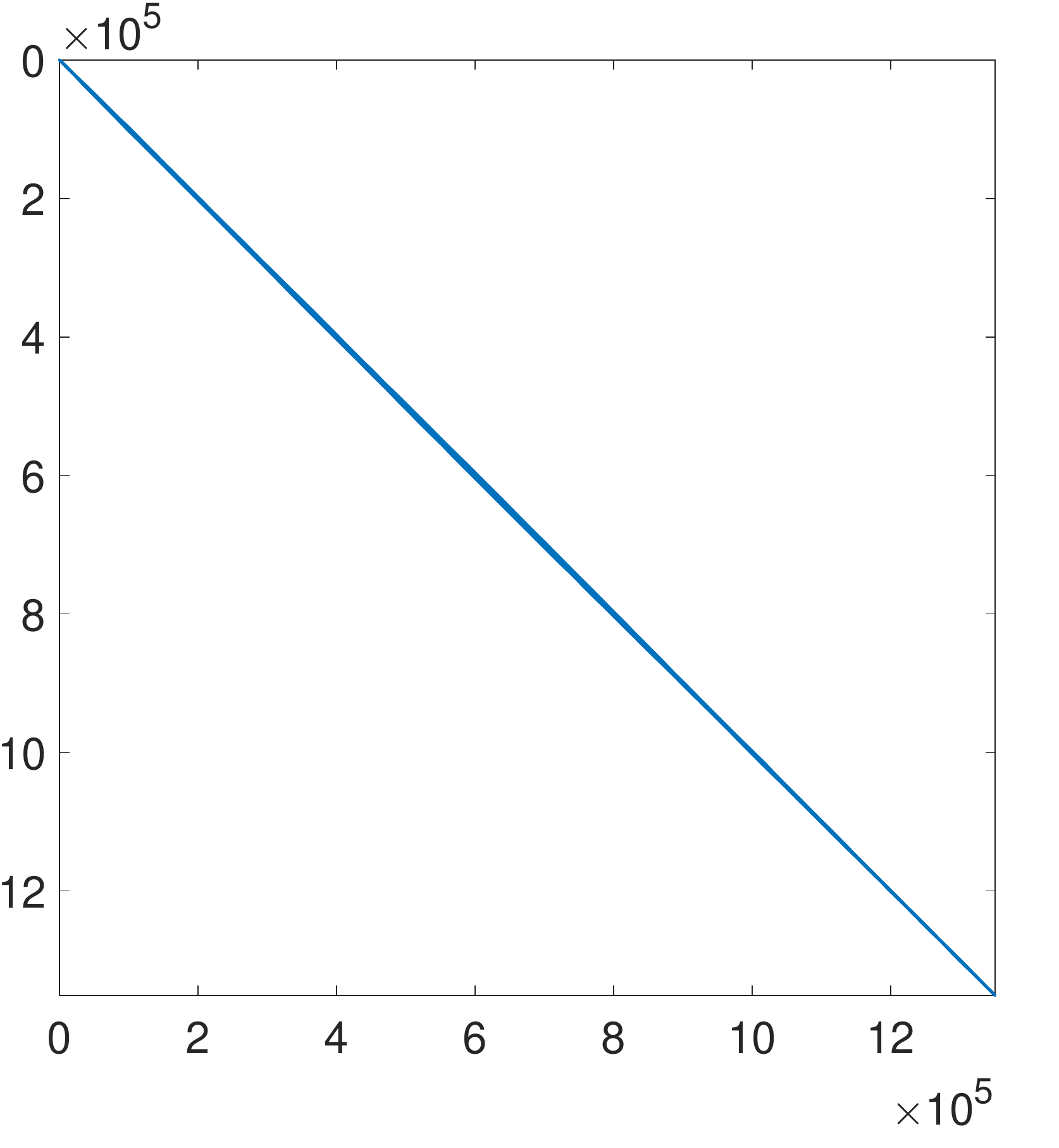}}
 \subfigure[{\bf RO}]{\label{fig:ro_kmr}\includegraphics[width=0.18\textwidth]{./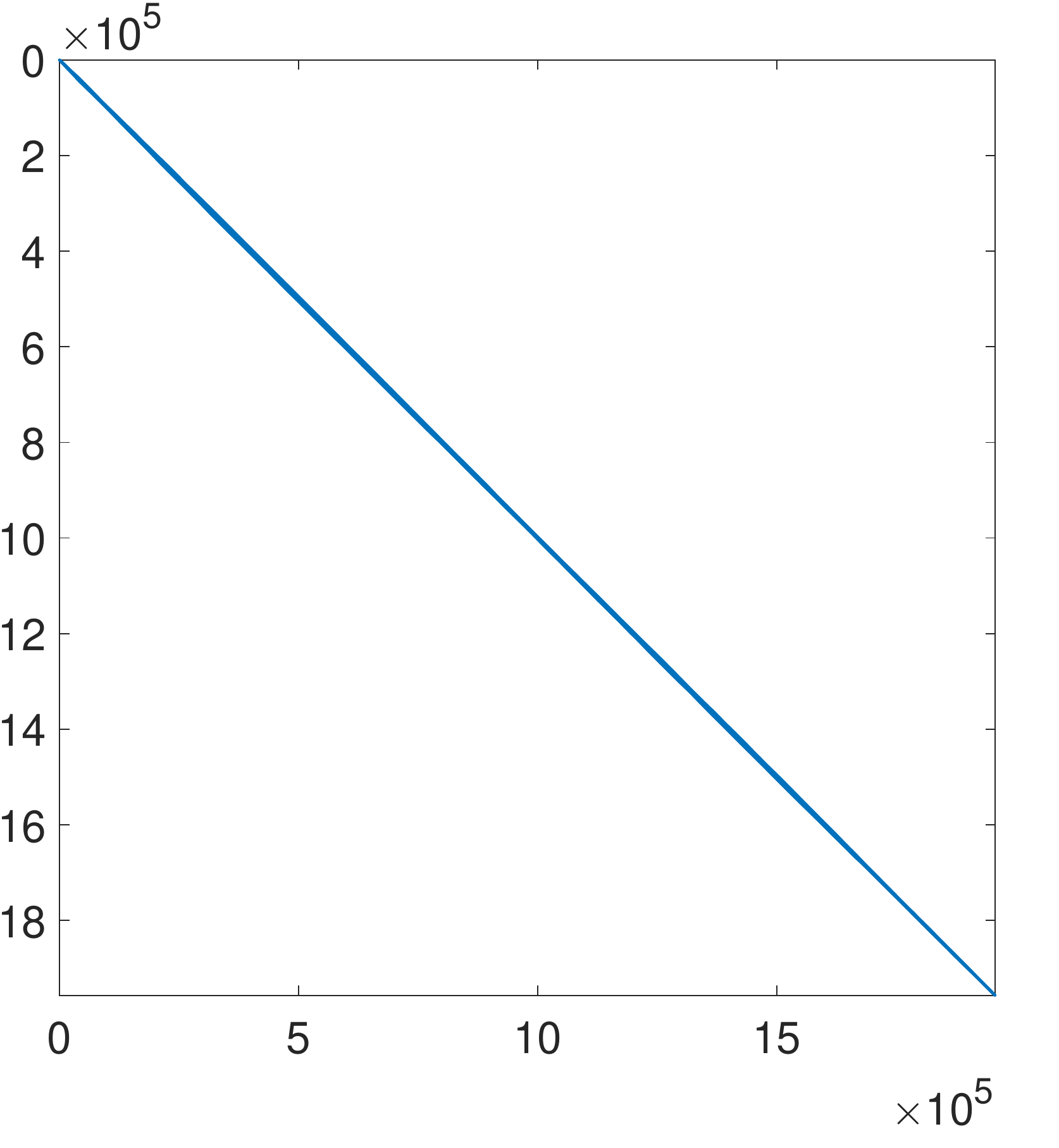}}
 \end{tabular}
 \caption{Networks in Table~\ref{tab:time_chol} have narrower bandwidth patterns after reordering.}\label{fig:pattern_band}
\end{figure}
\begin{table}[!ht]
\centering
\begin{tabular}{c|c|c|D{.}{.}{2}D{.}{.}{2}}
\multirow{2}{*}{Code} & \multirow{2}{*}{$n$} & \multirow{2}{*}{$nnz$} & \multicolumn{2}{c}{Time (sec)} \\ \cline{4-5}
                      & & & \multicolumn{1}{c}{\tt ndeigs} & \multicolumn{1}{c}{\tt eigs}\\
\hline
{\bf WO}  &\multicolumn{1}{r|}{  145,145} & \multicolumn{1}{r|}{ 1,457,605}  & 15.3 &  31.2 \\
{\bf CD}  & \multicolumn{1}{r|}{  317,080} & \multicolumn{1}{r|}{2,416,812}  & 76.6 & 249.5 \\
{\bf RD}  &\multicolumn{1}{r|}{1,087,562} & \multicolumn{1}{r|}{ 4,170,590}  &  8.0 &   9.6 \\
{\bf R1}  &\multicolumn{1}{r|}{1,351,137} & \multicolumn{1}{r|}{ 5,109,539}  &  8.0 &  11.7 \\
{\bf RO}  &\multicolumn{1}{r|}{1,957,027} & \multicolumn{1}{r|}{ 7,477,803}  & 11.9 &  16.9 \\
\end{tabular}
\caption{The CPU time of the {\tt eigs} and {\tt ndeigs} for networks having {\it thinner bandwidth} after the index permutation as shown in Figure~\ref{fig:pattern_band}. The contents of the first row of this table are as introduced in Table~\ref{tab:konect_sira}.}\label{tab:time_chol}
\end{table}

\begin{remark}\label{rmk:chol_band}
The {\tt isira} is committed to solve the {\normalfont GLEP} whose network pattern is wide even after suitable reordering of vertices. %
That is, in this case, the graph Laplacian matrix has no {\normalfont LU} factorization available. %
On the contrary, if the bandwidth of a matrix becomes narrow after applying some reordering algorithms, %
then the {\normalfont LU} factorization is very \textquotedblleft cheap\textquotedblright\ either on the demand of memory or the time of computing. %
Under such situations, {\tt eigs} and {\tt ndeigs} become preferable %
since the inner linear systems can be solved by the {\normalfont LU} factorization. %

Figure~\ref{tab:time_chol} gives five examples that the matrices have narrower bandwidth patterns %
after performing the reverse {\normalfont Cuthill-McKee} algorithm (the {\normalfont MATLAB} built-in function: {\tt symrcm}). %
Tale~\ref{fig:pattern_band} presents the information of these networks and the time cost of {\tt eigs} and {\tt ndeigs} methods. %
It is worth to mention that the rank-two correction strategy in {\tt ndeigs} \cite{HGLY14}, our previous method, makes the performance be superior to {\tt eigs}. %
\end{remark}

\section{Conclusions and Future Works}\label{sec:Conclusion}
Graph Laplacian eigenvalue problems focus on the extraction of potential quantities, such as eigenvalues and eigenvectors of network structures. %
Due to the graph Laplacian matrix, generated from a simple, connected and undirected graph, is symmetric positive semi-definite and the information what we are interested often be hidden in the eigenvalues with small magnitude. %
This indicates that we have to address the problem of solving a singular system. %
Thus, the study of efficient eigensolvers for finding some smallest positive eigenvalues %
and associated eigenvectors of the graph Laplacian eigenvalue problem is a challenging and important topic. %

To this end, based on the method of inexact shift-invert residual Arnoldi (SIRA) \cite{L07,LS07}, %
we derive a trimming technique in Section~\ref{sec:trimming} to %
implicitly remedy the singularity of the graph Laplacian matrix and get a solution %
which is orthogonal to the null space for expanding the searching subspace. %
Furthermore, we apply the deflating approach to exclude the influence of convergence eigenvalues %
so as to focus on capturing the smallest positive one and detect a few desired eigenvalues in order.
Numerical experiments show that, compared with the numerical implementations of traditional diagonal perturbation {\tt eigs} and the null-space deflation eigensolver {\tt ndeigs} \cite{HGLY14} %
using direct methods for inner linear systems, %
the new derived algorithm iSIRA, integrated SIRA (SIRA with trimming and deflating techniques), %
reveals more efficient and feasible when a LU factorization of the graph Laplacian matrix is not available.

According to the concise algorithm of SIRA, in the future, %
we will focus on the development a GPU version of the iSIRA algorithm (Algorithm~\ref{alg:SIRAtd}). %
In addition, we will pay attention to Laplacian matrix arising from a direct graph. %
Note that in such a case, all properties and advantages of a undirect Laplacian proposed in this paper are no longer be applied. %
How to explore new features and develop a fast eigensolver is under investigation.

\section{Acknowledgments}
The first three authors would like to thank the grant support from the Ministry of Science and Technology in Taiwan, and %
the ST Yau Center and Big Data Research at the National Chiao Tung University. %
The first and the third authors would further thank Dr. Pin-Yu Chen at IBM’s T.J. Watson Research Center %
for proving application information about this topic.
\bibliographystyle{abbrv}
\bibliography{references}

\end{document}